\newtheorem{theorem}{Theorem}
\newtheorem{claim}{Claim}
\newtheorem{corollary}{Corollary}
\newtheorem{definition}{Definition}
\newtheorem{lemma}{Lemma}
\newtheorem{remark}{Remark}
\numberwithin{equation}{section}
\DeclareMathOperator{\Tr}{Tr}
\DeclareMathOperator{\diam}{diam}
\DeclareMathOperator{\cl}{cl}
\DeclareMathOperator{\vect}{vec}
\DeclareMathOperator{\polylog}{polylog}
\newcommand{\calA}{\ensuremath{\mathcal{A}}}
\newcommand{\calB}{\ensuremath{\mathcal{B}}}
\newcommand{\calC}{\ensuremath{\mathcal{C}}}
\newcommand{\calS}{\ensuremath{\mathcal{S}}}
\newcommand{\calN}{\ensuremath{\mathcal{N}}}
\newcommand{\calT}{\ensuremath{\mathcal{T}}}
\newcommand{\calU}{\ensuremath{\mathcal{U}}}
\newcommand{\calK}{\ensuremath{\mathcal{K}}}
\newcommand{\calX}{\ensuremath{\mathcal{X}}}
\newcommand{\calKtil}{\ensuremath{\widetilde{\mathcal{K}}}}
\newcommand{\calStil}{\ensuremath{\widetilde{\mathcal{S}}}}
\newcommand{\Mtil}{\ensuremath{\widetilde{M}}}
\newcommand{\rhobar}{\ensuremath{\bar{\varrho}}}
\newcommand{\norm}[1]{\|{#1}\|}
\newcommand{\abs}[1]{\left|{#1}\right|}
\newcommand{\set}[1]{\left\{{#1}\right\}}
\newcommand{\dotprod}[2]{\left\langle#1,#2\right\rangle}
\newcommand{\est}[1]{\widehat{#1}}
\newcommand{\expec}{\ensuremath{\mathbb{E}}}
\newcommand{\matR}{\ensuremath{\mathbb{R}}}
\newcommand{\argmin}[1]{\underset{#1}{\operatorname{argmin}}}
\newcommand{\prob}{\ensuremath{\mathbb{P}}}
\newcommand{\frobsphere}{\mathbb{S}_n}
\newcommand{\frobball}{\calB_n}
\newcommand{\tanconeB}{\calT_{\calK,B}}
\newcommand{\tanconeBtil}{\widetilde{\calT}_{\calK,B}}
\newcommand{\tanconeAstar}{\calT_{\calK,A^*}}
\newcommand{\Xtil}{\widetilde{X}}
\newcommand{\Ubar}{\overline{U}}
\newcommand{\Vbar}{\overline{V}}
\newcommand{\Pbar}{\overline{P}}
\newcommand{\xitil}{\widetilde{\xi}}
\newcommand{\monot}{S^{\uparrow}}
\newcommand{\rev}[1]{\textcolor{black}{#1}} 
\newcommand{\revo}[1]{\textcolor{black}{#1}} 
\newcommand{\revj}[1]{\textcolor{black}{#1}} 
\title{Learning linear dynamical systems under convex constraints}
\author{ Hemant Tyagi$^{1}$\\ \texttt{hemant.tyagi@ntu.edu.sg} \and Denis Efimov$^{2}$\\ \texttt{denis.efimov@inria.fr}} 
\date{$^1$Division of Mathematical Sciences, 
      SPMS, 
      NTU Singapore 637371 \\
$^2$\revj{Inria, Univ. Lille, CNRS, UMR 9189 - CRIStAL, F-59000 Lille, France} \newline\newline
\today
} 
\begin{document}
\maketitle

%
\begin{abstract}
  We consider the problem of finite-time identification of linear dynamical systems from $T$ samples of a single trajectory. Recent results have predominantly focused on the setup where \revo{either} no structural assumption is made on the system matrix $A^* \in \matR^{n \times n}$, \revo{or specific structural assumptions (e.g. sparsity) are made on $A^*$.}
%
%
We assume prior structural information on $A^*$ is available, which can be captured in the form of a convex set $\calK$ containing $A^*$. For the solution of the ensuing constrained least squares estimator, we derive non-asymptotic error bounds in the Frobenius norm that depend on the local size of $\calK$ at $A^*$. To illustrate the usefulness of these results, we instantiate them for \rev{four} examples, namely when (i) $A^*$ is sparse and $\calK$ is a suitably scaled $\ell_1$ ball; (ii) $\calK$ is a subspace; (iii) $\calK$ consists of matrices each of which is formed by sampling a bivariate convex function on a uniform $n \times n$ grid (convex regression); \rev{(iv) $\calK$ consists of matrices each row of which is formed by uniform sampling (with step size $1/T$) of a univariate Lipschitz function}. In all these situations, we show that $A^*$ can be reliably estimated for values of $T$ much smaller than what is needed for the unconstrained setting.
  
\end{abstract}

\section{Introduction} \label{sec:intro}
We consider the problem of finite-time identification of a linear dynamical system (LDS) of the form
\begin{equation} \label{eq:lin_dyn_sys_mod}
    x_{t+1} = A^* x_t + \eta_{t+1} \quad \text{ for } t=0,1,\dots,T \quad \text{ and } x_0 = 0,
\end{equation}
where $A^* \in \matR^{n \times n}$ is the unknown system  matrix to be estimated, $x_t \in \matR^n$ is the observed state at time $t$, and $\eta_t \in \matR^n$ is the unobserved (random) process noise.  Such problems arise in many areas such as control theory, reinforcement learning and time-series analysis to name a few. An important line of research in recent years has focused on theoretically analyzing the performance of the ordinary least squares (OLS) estimator, by deriving non-asymptotic error bounds for the estimation of $A^*$ (e.g., \cite{FaraUnstable18,Simchowitz18a, Sarkar19, Jedra20}), holding with high probability provided $T$ is sufficiently large.
The analyses  depend crucially on the spectrum of $A^*$ -- in particular on the spectral radius of $A^*$, namely $\rho(A^*)$. 

The focus of this paper is the strictly stable setting where $\rho(A^*) < 1$. 
Denoting $\Gamma_s(A) = \sum_{k=0}^s A^k (A^k)^{\top}$ for $s \geq 0$ to be the controllability Grammian of the system, and $\lambda_{\min}(\cdot)$ to be the smallest eigenvalue of a symmetric matrix, it was shown recently \cite{Jedra20} that the OLS estimate $\est{A}$ satisfies with probability at least $1-\delta$ 
\begin{equation*}
\norm{\est{A} - A^*}_2 \lesssim \sqrt{\frac{\log(1/\delta) + n}{\lambda_{\min}(\sum_{s=0}^{T-1} \Gamma_s(A^*))}},
\end{equation*}
provided $\lambda_{\min}(\sum_{s=0}^{T-1} \Gamma_s(A^*)) \gtrsim J^2(A^*)(\log(1/\delta) + n)$. Here $\norm{\cdot}_2$ denotes the spectral norm and $(\eta_t)_{t \geq 1}$ are considered to be i.i.d subgaussian vectors -- see Section \ref{prob_setup} for a description of notations. The quantity $J(A^*)$ is defined in \eqref{eq:stab_param} and is finite when $\rho(A^*) < 1$; it is moreover bounded by a constant if $\norm{A^*}_2 < c < 1$ for some constant $c$. Since $\lambda_{\min}(\sum_{s=0}^{T-1} \Gamma_s(A^*)) \geq T$, we can rewrite the above bound as  
\begin{equation} \label{eq:err_bd_jedra}
    \norm{\est{A} - A^*}_2 \lesssim \sqrt{\frac{\log(1/\delta) + n}{T}} \quad \text{ if } \quad T \gtrsim  J^2(A^*)(\log(1/\delta) + n).
\end{equation}
In other words,  a meaningful error bound is ensured provided the length of the trajectory is at least of the order of the dimension $n$. Furthermore, this bound is also optimal in terms of dependence on $\delta, n$ and $T$ \cite{Simchowitz18a}.  

It is natural to consider the scenario where additional structural information is available regarding $A^*$ -- in this case one would expect that incorporating such information in the estimation procedure should lead to an improved performance compared to the vanilla OLS estimator. 
In many cases of interest, $A^*$ actually has an intrinsically low dimensional structure and it is possible to capture this structural information of $A^*$ through a known convex set $\calK$ containing $A^*$. Computationally, the estimate $\est{A}$ is then obtained by the constrained least squares estimator 
\begin{equation} \label{eq:Aest_convex}
 \est{A} \in \argmin{A \in \calK} \sum_{t=0}^{T} \norm{ x_{t+1} - A x_t}_2^2
\end{equation}
which is also a convex program that can typically be solved efficiently in practice. From a statistical perspective, one would expect to be able to improve the error bounds in \eqref{eq:err_bd_jedra} in terms of the dependence on the (extrinsic) dimension $n$. Some  examples of such $\calK$ -- which will also be used later for instantiating our more general results -- are outlined below.
\begin{enumerate}
\item ({\bf Example 1}) $\calK$ is a $d$-dimensional subspace of $\matR^{n \times n}$ for some $d \leq n^2$. This was considered recently in \cite{zheng2021finite} which cited applications of this setup in the time series analysis of spatio-temporal studies and social networks.

\item ({\bf Example 2}) If $A^*$ is $k$-sparse, i.e., has $k$ non-zero entries, then one can choose $\calK$ to be a suitably scaled $\ell_1$ ball such that $A^* \in \calK$. Such assumptions exist in the literature for model \eqref{eq:lin_dyn_sys_mod} as will be discussed in Section \ref{subsec:rel_work}. It is well known in the statistics and signal processing communities that the resulting estimator -- known as the LASSO -- promotes solutions which are sparse (see, e.g.,  \cite{candes09,neghabhan12,candes06}). 

\item ({\bf Example 3}) In this case, we consider $A^*$ to be formed by sampling an unknown convex function $f_0: [0,a]^2 \rightarrow \matR$ on a uniform grid. Then, the set $\calK$ can be written as
\begin{equation*}
  \calK = \set{A \in \matR^{n \times n}: \ A_{ij} = f\left(\frac{\revj{(i-1)a}}{n-1}, \frac{\revj{(j-1)a}}{n-1} \right) \revj{,\; i,j\in[n],} \text{ for some convex } f:[0,a]^2 \rightarrow \matR}.   
\end{equation*}
This problem is similar to the problem of convex-regression that has been studied extensively in the statistics community in the i.i.d setting, see,  e.g., \cite{hildreth54, Kur2020ConvexRI,Deme17} and references therein. From a computational perspective, it is useful to know that $\calK$ can be characterized as a convex cone \cite[Lemma 2.2]{Seijo11} which thus leads to a convex quadratic program that can be solved efficiently through standard solvers in practice; there also exist efficient specialized solvers for convex regression (e.g., \cite{chen2021multivariate}).

\item \rev{({\bf Example 4}) In this case, each row of $A^*$ is formed by uniform sampling (with step size $1/T$) of a Lipschitz function $f_i: [0,1] \rightarrow \matR$ (with Lipschitz constant $L_i$) for \revj{$i \in [n]$}. Then the set $\calK$ can be written as
\begin{equation*}
    \calK =  \set{A \in \matR^{n \times n}:  \abs{A_{i,j} - A_{i,j+1}} \leq \frac{L_i}{T}, \ i \in [n], \ j \in [n-1]}.
\end{equation*}
This example is motivated by a similar example proposed in \cite{neykov19b} for structured linear regression from i.i.d observations.
}
\end{enumerate}
In Examples $1,2$, the intrinsic dimension of $A^*$ is essentially captured by the quantities $d$ or $k$, and we expect that the  error bounds in \eqref{eq:err_bd_jedra} should improve in terms of exhibiting a milder dependence on $n$. In particular, when $d, k \ll n^2$, we expect the estimation error for $A^*$ to be small for moderately large values of $T$. \rev{While this is less obvious for Examples $3$ and $4$, we will see later that similar conclusions can be drawn; for Example $3$ this will be shown when $f_0$ is piecewise affine with a `simple structure'}.

\subsection{Our contributions} 
For the setting where $\rho(A^*) < 1$ and $A^* \in \calK$, we derive non-asymptotic bounds on the estimation error in the Frobenius norm $\norm{\est{A} - A^*}_F$ for the estimator \eqref{eq:Aest_convex}, holding with high probability; see Theorems \ref{thm:main_err_tangent_cone} and \ref{thm:no_tancone_Struc} for the full statements, \revj{which also constitute our main results}. Our bounds depend on the `local size' of the set $\calK$ at $A^*$, captured via Talagrand's $\gamma_1, \gamma_2$ functionals \cite{talagrand2014upper} (see Definitions \ref{def:gamma_fun} and \ref{def:tancone} in Section \ref{prob_setup}). Upon instantiating our bounds for the aforementioned choices of $\calK$, we obtain the following corollaries.
\begin{enumerate}
    \item ({\bf Example 1}) In this case, Theorem \ref{thm:main_err_tangent_cone} states (see Corollary \ref{cor:subspace}) that with probability at least $1-\delta$, 
    \begin{equation} \label{eq:intro_subsp_bd_corr}
      \norm{\est{A} - A^*}_F \lesssim   J(A^*)\left(\frac{\log(1/\delta) + \sqrt{d}}{\sqrt{T}} \right) \quad \text{if} \quad T \gtrsim J^4(A^*) \max\set{d, \log^2(1/\delta)} .
    \end{equation}
   Suppose for simplicity that $\norm{A^*}_2 < 1$ so that $J(A^*)$ is a constant. If $d = n^2$, we obtain the rate $\frac{n}{\sqrt{T}}$ which matches that obtained from \eqref{eq:err_bd_jedra}  using the standard inequality $\norm{\est{A} - A^*}_F \leq \sqrt{n} \norm{\est{A} - A^*}_2$. Moreover, we would then also need $T \gtrsim n^2$ in \eqref{eq:err_bd_jedra} in order to drive $\norm{\est{A} - A^*}_F$ below a specified threshold. For general $d$, however, the sample complexity of estimating $A^*$ is seen to be of order $d$ which is relevant when $d \ll n^2$.

    \item ({\bf Example 2})  In this case we obtain Corollary \ref{cor:sparse_example} of Theorem \ref{thm:main_err_tangent_cone} which is best interpreted for specific regimes of the sparsity level $k$. For instance, if $k$ is of the order $n$, we show that 
      \begin{equation*}
       \norm{\est{A} - A^*}_F \lesssim J(A^*) \left(\frac{\log(1/\delta) + \sqrt{n \log n}}{\sqrt{T}} + \frac{(n \log n)^{3/2}}{T} \right) 
   \end{equation*}
   \rev{with probability at least $1-\delta$}, if $T \gtrsim J^4(A^*) \max\set{n \log n, \log^2(1/\delta)}$. Assuming $J(A^*)$ is constant, note that $T \gtrsim (n \log n)^{3/2}$ suffices to drive $\norm{\est{A} - A^*}_F$ below a specified threshold, however, this is still much milder than what we need in general.

   \item ({\bf Example 3}) Suppose $f_0$ is piecewise affine convex with $K$ pieces, and where the $i$th piece lies on a convex subdomain $\Omega_i$ with $\Omega_1,\dots,\Omega_K$ a partition of the domain $[0,a]^2$. Suppose each $\Omega_i$ is formed by the intersection of at most $s$ pairs of parallel halfspaces\footnote{We use the terminology of \cite{Kur2020ConvexRI} here. A pair of parallel halfspaces is the set $\set{v \in \matR^2: a \leq w^\top v \leq b}$ for scalars $a < b$ and a unit vector $w \in \matR^2$.}. Then Corollary \ref{cor:convex_reg_biv} of Theorem \ref{thm:no_tancone_Struc} implies that if $n$ is large enough and $T \gtrsim J^4(A^*) \max \set{c^s K \log^s n, \log^2\left(\frac{1}{\delta}\right)}$  then with probability at least $1-\delta$ 
     \begin{equation*}
      \norm{\est{A} - A^*}_F \lesssim J(A^*) \left(\frac{K c^s \log^{s+1} n}{T} + \frac{\log(1/\delta)}{\sqrt{T}}\right)   
     \end{equation*}
  for some constant $c > 0$. So if $J(A^*), K$ and $s$ are constants then it suffices that $T \gtrsim \polylog n$ to drive the error below a specified threshold.  

  \item \rev{({\bf Example 4}) In this case, with $L$ such that $L_i \leq L$ for all $i \in [n]$,  Corollary \ref{cor:lipschitz_reg_examp} of Theorem \ref{thm:main_err_tangent_cone} roughly states that $\norm{\est{A} - A^*}_F \leq \epsilon$ is ensured with probability at least $1-\delta$, if
  \begin{equation*}
      T \gtrsim \revj{J^4(A^*) \max \set{\frac{\log^2(1/\delta)}{\epsilon^2},  L^{1/2} \frac{n^{3/2} (\log n)^{1/2}}{\epsilon^{3/2}}, 
  L^{1/3} \frac{n^{5/3} \log n }{\epsilon}}.}
  \end{equation*}
  If $J(A^*)$ and $\epsilon$ are constants, then $T = \revj{\Omega(n^{5/3} \log n)}$ samples are sufficient for accurate estimation of $A^*$.
  }
\end{enumerate}
\revj{To put the above examples in perspective, we remark that the bound in Corollary \ref{cor:subspace} matches with the one obtained in \cite{zheng2021finite} up to a slightly worse dependence on $\log(1/\delta)$. This is discussed in more detail in Section \ref{subsec:rel_work}.} 

\revj{The error rate in Corollary \ref{cor:sparse_example} is unfortunately sub-optimal which is essentially due to the appearance of the $\gamma_1$ functional in Theorem \ref{thm:main_err_tangent_cone}; this issue is discussed later in Remark \ref{rem:subopt_sparse_example} as well. There exist results for this particular setting in the literature  which achieve the optimal error rate. This is discussed in Remark \ref{rem:comp_sparseA_lit} for the work \cite{Basu15}.}

\revj{The statements of Corollaries \ref{cor:convex_reg_biv} and \ref{cor:lipschitz_reg_examp} are new, to the best of our knowledge, in the context of identification of structured linear dynamical systems.}

\subsection{Related work} \label{subsec:rel_work}
\paragraph{Learning unstructured LDS.} A recent line of work has focused on deriving non-asymptotic error bounds for learning linear systems of the form \eqref{eq:lin_dyn_sys_mod}, without any explicit structural assumption on $A^*$. The majority of these works analyze the OLS under different assumptions on $\rho(A^*)$, namely: strict stability ($\rho(A^*) < 1$) \cite{Jedra20,FaraUnstable18}; marginal stability ($\rho(A^*) \leq 1$) \cite{Simchowitz18a, Sarkar19}; purely explosive systems ($\rho(A^*) > 1$) \cite{FaraUnstable18,Sarkar19}. While $\est{A}$ is known in closed form, the main challenge in the analysis comes from handling the interaction between the matrix of covariates $x_t$, and that of noise terms $\eta_t$ due to their dependencies. Common techniques used in the analysis involve concentration results for self normalized processes \cite{selfnormbook, abbasi11}, and Mendelson's ``small-ball'' method \cite{pmlr-v35-mendelson14}, the latter of which was extended to dependent data in \cite{Simchowitz18a} leading to sharper error bounds. When $\rho(A^*) \leq 1$, the authors in \cite{Simchowitz18a} interpret the quantity $\lambda_{\min}(\Gamma_{T-1})$ as a measure of the signal-noise-ratio \cite{Simchowitz18a} -- larger values  lead to improved error bounds. As mentioned earlier, the results of \cite{Jedra20} depend on a similar quantity, namely 
$\lambda_{\min}(\sum_{s=0}^{T-1} \Gamma_{s})$, which plays a key role in their error bounds. These terms do not appear explicitly within our \revj{results, but this is likely an artefact of our analysis.} 
The main tools that we employ involve concentration results for the suprema of second-order subgaussian chaos processes indexed by a set of matrices \cite{krahmer14, dirksen15}; see Section \ref{subsec:tools} for details. 

\paragraph{Learning structured LDS.} Existing works for learning structured LDS are typically focused on specific structures such as sparsity, group sparsity, or low-rank models.
\begin{itemize}
\item In \cite{Fattahi2019}, a more general version of \eqref{eq:lin_dyn_sys_mod} was considered where $x_{t+1} = A^* x_t + B^* u_t + \eta_t$, with $B^* \in \matR^{n \times m}$ and $u_t \in \matR^{m}$ denoting the inputs. Assuming the unknown $A^*, B^*$ to be $k$-sparse, and $u_t = K_0 x_t + v_t$ where $v_t$ is random with a user specified distribution ($K_0$ is a feedback controller), a LASSO type estimator was analyzed. Assuming $x_0$ rests at its stationary distribution, uniform asymtotic stability of the closed-loop system, and certain technical assumptions involving $A^*, B^*$ and $K_0$, entry-wise error bounds were obtained for the estimation of $A^*, B^*$. It was shown that these bounds can sometimes be obtained with $T$ of the order $k^2 \log (n + m)$. If $k$ is of order $n$, this means that $T \gtrsim n^2 \log (n+m)$ samples are needed for recovering the \emph{support} of $A^*, B^*$. This is larger than our sample complexity bound for controlling the Frobenius norm error. 

\item In \cite{Pereira10}, the model \eqref{eq:lin_dyn_sys_mod} was considered with $A^*$ assumed to be $k$-sparse and strictly stable. Under certain assumptions on the problem parameters, it was shown for a LASSO-type estimator that the support of $A^*$ is recovered exactly provided $T \gtrsim \revj{k^3}\log n$. \revj{If $k$ is of order $n$, then this translates to $T \gtrsim n^3 \log n$ which is worse than our obtained scaling $(n \log n)^{3/2}$ for controlling the Frobenius norm error.} 

\item The model \eqref{eq:lin_dyn_sys_mod} is a vector autoregressive (VAR) model of order $1$. In \cite{Melnyk16}, a more general order $d$-VAR model\footnote{Note that an order $d$-VAR can be written as an order-$1$ VAR in a higher dimensional space.} was considered involving matrices $A^*_1,\dots,A^*_d$. They analyzed a penalized least squares method where the penalty is imposed by a norm-based regularizer $R(\cdot)$; examples of $R$ including: $\ell_1$ norm, group LASSO, sparse group LASSO and OWL (ordered weighted $\ell_1$ norm). The recent work \cite{Wang23} considered an order \revj{$d$-VAR} as well, but allowed for heavy-tail $\eta_t$'s and model-misspecification. They proposed an estimator based on the Yule-Walker equation that deploys a regularizer (norm-based) $R$, this leads to near minimax optimal results for estimating the model parameters under different structural settings: 
sparsity, low-rankness, and linear constraints. Other existing results primarily make sparsity-type assumptions on the parameters \cite{Loh12,Basu15,Kock2015,song2011large}. The recent work \cite{Basu19} considered the model \eqref{eq:lin_dyn_sys_mod} and assumed that $A^*$ has a low-rank plus sparse decomposition, they analyzed a penalized least squares approach for recovering the low-rank and sparse components of $A^*$. \revj{If $\calK$ is the $\ell_1$ ball and $A^*$ is sparse (i.e., Example $2$), then the instantiation of our general bound in Theorem \ref{thm:main_err_tangent_cone}  leads to sub-optimal rates (cf. Corollary \ref{cor:sparse_example}) in terms of the dependence on dimension $n$. See Remarks \ref{rem:subopt_sparse_example} and \ref{rem:comp_sparseA_lit} for further discussion.}

\item The results in \cite{zheng2021finite} are applicable to model \eqref{eq:lin_dyn_sys_mod}, with additional linear information about $A^*$ assumed to be available. This can be reformulated as saying that for a known $d$-dimensional basis $\set{V_i}_{i=1}^d \subset \matR^{n \times n}$ and a known offset $\bar{V} \in \matR^{n \times n}$, we have $A^* - \bar{V} \in \text{span}\set{V_i}_{i=1}^d$. This is identical to Example $1$. If $\rho(A^*) < 1$ and $\norm{A^*}_2 \leq C$ for some constant $C > 0$, they show \revj{\cite[Theorem 3(ii)]{zheng2021finite}} that $\norm{A^* - \est{A}}_F \lesssim \sqrt{\frac{d\log(d/\delta)}{T}}$ provided the smallest singular value of $A^*$ is  smaller than $1$.  This is \revj{slightly worse than} our bound in \eqref{eq:intro_subsp_bd_corr}, \revj{however they also show \cite[Theorem 3(iii)]{zheng2021finite} that if the condition on $\norm{A^*}_2$ is replaced by the stronger condition that for some $\kappa \in (0,1)$, $\norm{(A^*)^t}_2 \leq C \kappa^t$ holds\footnote{\revj{In this case, note that $J(A^*)$ as defined in \eqref{eq:stab_param} is bounded by $C/(1-\kappa)$.}} for all integers $1 \leq t \leq T$, then\footnote{\revj{\cite[Theorem 3(iii)]{zheng2021finite} requires $(\eta_t)_{t \geq 1}$ to be normal random variables.}} $\norm{A^* - \est{A}}_F \lesssim \sqrt{\frac{d + \log(1/\delta)}{T}}$. This bound has a better dependence on $\log(1/\delta)$ than \eqref{eq:intro_subsp_bd_corr}. Moreover,} they also cover the \revj{more challenging} setting $\rho(A^*) \leq 1$ (\revj{marginal stability}) where the analysis uses the small ball method \cite{pmlr-v35-mendelson14}, \revj{and makes explicit use of the closed form expression of $\est{A}$ in the analysis}. \revj{Our analysis does not apply to the marginal stability setting, and this seems particularly difficult to achieve for general $\calK$. Finally, we note that \cite{zheng2021finite} also provides minimax lower bounds on the estimation error when $(\eta_t)_{t \geq 1}$ are normally distributed, for different stability regimes.}
\end{itemize}
\rev{The work \cite{PGDstruct21} studies the model \eqref{eq:lin_dyn_sys_mod} and analyses the estimator \eqref{eq:Aest_convex} when $\calK$ is defined via a convex regularizer $R(\cdot)$. The projected gradient descent (PGD) method for solving \eqref{eq:Aest_convex} is analyzed under suitable stability assumptions on $A^*$. It is shown that the iterates of PGD converge linearly (in the Frobenius norm) to the statistical error, as measured by the Gaussian width of the (local) tangent cone of $\calK$ at $A^*$, for $T$ suitably large. We discuss this in more detail later in Remark \ref{rem:rel_work_comp} after detailing our results.}

\paragraph{Learning structured signals from random linear measurements.} Consider the relatively easier setting of linear regression\footnote{\revj{Note that \eqref{eq:lin_dyn_sys_mod} can be rewritten (using Kronecker products) in the form of \eqref{eq:struc_signal_rec_ind}, where $\beta^*$ will correspond to the vectorized version of $A^*$. The important difference is that as opposed to the setting of (1.5) -- where $X$ is independent of $\eta$ and is typically assumed to have independent rows -- this reformulation of (1.1) leads to a linear model where $X$ is dependent on $\eta$ and also has dependent rows.}} with independent covariates and noise, i.e., 
\begin{equation} \label{eq:struc_signal_rec_ind}
y = X \beta^* + \eta
\end{equation}
where $X \in \matR^{m \times n}$ is the matrix of covariates, $\beta^* \in \matR^n$ is the unknown signal,  $\eta$ is noise, and $X$ is \emph{independent} of $\eta$. Suppose for simplicity that the entries of $X$ and $\eta$ are i.i.d centered Gaussian's. The problem of recovering $\beta^*$ -- assuming it belongs to a convex set $\calK \subseteq \matR^n$ -- has received significant interest over the past decade from the statistical and signal processing communities. It is now known that the efficient recovery of $\beta^*$ is possible via convex programs (e.g., via solving least squares with constraint $\calK$, or by penalizing the objective) with the sample complexity $m$ depending on the Gaussian width of the `local size' of $\calK$ at $\beta^*$; see for e.g., \cite{neykov19b, rudelson08, chandra12, Tropp2015, planlasso16} and also \cite{lotz14} who introduced a related notion of `statistical dimension'. For some sets $\calK$ (such as the $\ell_1$ ball), sharp estimates for the Gaussian width are available through tools such as Gordon's escape through the mesh theorem \cite{Gordon88}, that leads to tight sample complexity bounds. While our proof technique is similar in spirit to these papers (in particular \cite{neykov19b,planlasso16}), the model in \eqref{eq:lin_dyn_sys_mod} leads to additional technical difficulties. For instance, we cannot use Gordon's theorem anymore and require other concentration tools for the underlying second order subgaussian chaos. To our knowledge, existing works for finite-time identification of \eqref{eq:lin_dyn_sys_mod} \revj{typically} do not provide bounds for general closed convex sets $\calK$, \revj{see also Remark \ref{rem:rel_work_comp}}. Our main goal is to fill this gap (to an extent) by drawing ideas from the above literature.  

\paragraph{Shape constrained regression.} There is a rich line of work for the Gaussian sequence model  
\begin{equation*}
  y_i = \beta_i^* + \eta_i; \quad i=1,\dots,n,
\end{equation*}
where $\beta^*_i = f(x_i)$ (with $x_i$'s the design points) for some unknown function $f \in \mathcal{F}$, with the function class $\mathcal{F}$ known. This is known as `shape constrained' nonparametric regression since the shape of $f$ (depending on $\mathcal{F}$) imparts structure to $\beta^*$ of the form $\beta^* \in \mathcal{K} \subset \mathbb{R}^n$ where 
$$\calK := \set{(f(x_1),\dots,f(x_n)) \in \matR^n: \ f \in \mathcal{F}}.$$
When $\eta_i$'s are i.i.d centered Gaussians, then the maximum likelihood estimator (MLE) of $\beta^*$ would be the orthogonal projection of $y$ on $\calK$. 
There exist many interesting examples of $\mathcal{F}$ which impart an intrinsically low-dimensional structure to $\mathcal{K}$, thus leading to better error bounds (in terms of $n$) for the MLE as compared to the unconstrained case. Some examples include: convex functions (e.g., \cite{Chatter15,Kur2020ConvexRI}), isotonic functions (e.g., \cite{Chatter15,Bellec18}) and Lipschitz functions \cite{neykov19b}; see \cite{Guntu18Survey} for a nice overview of this area and also the seminal work of Chatterjee \cite{Chatter14Persp}. In this respect, our work can be interpreted as shape constrained regression for the vector autoregressive model in \eqref{eq:lin_dyn_sys_mod}.

\section{Problem setup and results} \label{prob_setup}
\paragraph{Notation.} For any vector $x \in \matR^n$, $\norm{x}_p$ denotes the usual $\ell_p$ norm of $x$. For $X \in \matR^{n \times m}$, we denote $\norm{X}_2$, $\norm{X}_F$ to be respectively the spectral and Frobenius norms of $X$, while $\dotprod{X}{Y} = \Tr(X^\top Y)$ denotes the inner product between $X$ and $Y$. Also, $\vect(X) \in \matR^{nm}$ is formed by stacking the columns of $X$ and $\norm{X}_{1,1} = \norm{\vect(X)}_1$ denotes the entry-wise $\ell_1$ norm of $X$. The symbol $\otimes$ denotes the Kronecker product between matrices. Sets will be usually denoted by calligraphic letters, and their cardinalities by $\abs{\cdot}$. For any set $\calS \subset \matR^{n \times m}$ and $X \in \matR^{n \times m}$, we will denote 
$$\calS - X = \set{A-X: A \in \calS}.$$
\revj{Moreover, for any integer $n \geq 1$, we denote the set $[n] = \set{1,2,\dots,n}$.} For $a,b > 0$, we say $a \lesssim b$ if there exists a constant $C > 0$ such that $a \leq C b$. If $a \lesssim b$ and $a \gtrsim b$, then we write $a \asymp b$. For $n \times n$ matrices, we denote the unit Frobenius sphere by $\frobsphere$, the unit Frobenius ball by $\frobball$, and the identity matrix by $I_n$. The values of symbols used for denoting constants (e.g., $c, C, c_1$ etc.) may change from line to line. Finally, recall that the subgaussian norm of a random variable $X$ is given by 
$\norm{X}_{\psi_2}:= \sup_{p \geq 1} p^{-1/2} (\expec \abs{X}^p)^{1/p}$, see for e.g. \cite{vershynin_2012}. We say $X$ is $L$-subgaussian if $\norm{X}_{\psi_2} \leq L$.

\subsection{Setup}
Consider the autonomous linear dynamical system in \eqref{eq:lin_dyn_sys_mod} where $(\eta_t)_{t \geq 1}$ are assumed to be zero-mean, independent and identically distributed (i.i.d) random variables for $t=0,\dots,T$. Specifically, $\eta_t$ is assumed to have i.i.d $L$-subgaussian entries (for some constant $L$), each of unit variance. Given $(x_t)_{t=0}^{T+1}$, our goal is to estimate $A^*$ under the constraint that $A^* \in \calK$ for a closed convex set $\calK \subseteq \matR^{n \times n}$. We focus on the estimator  \eqref{eq:Aest_convex} which is a convex program that can usually be efficiently solved by interior-point methods, and in many cases by more efficient methods (e.g., projected gradient descent) specialized to the structure of $\calK$. Also, if $\eta_t$ were i.i.d Gaussian's, then \eqref{eq:Aest_convex} would simply correspond to the maximum likelihood estimator (MLE) of $A^*$.

In our analysis, we will assume $A^*$ is strictly stable, i.e., its spectral radius $\rho(A^*) < 1$. The quantity $J(A^*)$ defined as 
\begin{equation} \label{eq:stab_param}
  J(A^*) := \sum_{i=0}^{\infty} \norm{(A^*)^i}_2  
\end{equation}
was introduced in \cite{Jedra20} for the analysis of the OLS estimator for strictly stable linear dynamical systems, and will also appear in our results. It is not difficult to verify that $J(A^*)$ is \revj{finite} if $\rho(A^*) < 1$, although it could grow with $n$. If $\norm{A^*}_2 < 1$ then $J(A^*) \leq \revo{\frac{1}{1-\norm{A^*}_2}}$. \revj{As noted in Section \ref{subsec:rel_work}, if for some $\kappa \in (0,1)$ and $C > 0$, $\norm{(A^*)^t}_2 \leq C \kappa^t$ holds for all integers $1 \leq t \leq T$, then $J(A^*) \leq \frac{C}{1-\kappa}$.}
%
%

Before stating our results, we need to present some definitions which will be used later on. 

%
%
\subsection{Preliminaries} \label{subsec:prelim}
We begin by recalling Talagrand's $\gamma_{\alpha}$ functionals \cite{talagrand2014upper} which can be interpreted as a measure of the complexity of a (not necessarily convex) set.
\begin{definition}[\cite{talagrand2014upper}] \label{def:gamma_fun}
Let $(\calS,d)$ be a metric space. We say that a sequence of subsets of $\calS$, namely $(\calS_r)_{r \geq 0}$ is an admissible sequence if $\abs{\calS_0} = 1$ and $\abs{\calS_r} \leq 2^{2^{r}}$ for every $r \geq 1$. Then for any $0 < \alpha < \infty$, the $\gamma_{\alpha}$ functional of $(\calS, d)$ is defined as 
\begin{equation*}
    \gamma_{\alpha}(\calS, d) := \inf \sup_{s \in \calS} \sum_{r=0}^{\infty} 2^{r/\alpha} d(s, \calS_r)
\end{equation*}
with the infimum being taken over all admissible sequences of $\calS$. 
\end{definition}
The following properties of the $\gamma_{\alpha}$ functional are useful to note.
\begin{enumerate}
\item It can be verified that for any two metrics $d_1, d_2$ such that $d_1 \leq a d_2$ for some $a > 0$,  it holds
that $\gamma_{\alpha}(\calS, d_1) \leq a \gamma_{\alpha}(\calS, d_2)$.

\item For $\calS' \subset \calS$, we have that $\gamma_{\alpha}(\calS', d) \leq C_{\alpha} \gamma_{\alpha}(\calS, d)$ for $C_{\alpha} > 0$ depending only on $\alpha$.

\item \revo{If $f: (\calT, d_1) \rightarrow (\calU, d_2)$ is onto and for some constant $M$ satisfies
\begin{equation*}
    d_2(f(x), f(y)) \leq M d_1(x,y) \quad \text{for all } x,y \in \calT,
\end{equation*}
then $\gamma_{\alpha}(\calU, d_2) \leq C_{\alpha} M \gamma_{\alpha}(\calT, d_1)$ for $C_{\alpha} > 0$ depending only on $\alpha$.}
\end{enumerate}
\revo{Properties $2$ and $3$ above are stated in \cite[Theorem 1.3.6]{tal05} (with $C_{\alpha} = 1$) for an alternative definition of the $\gamma_{\alpha}$ functional \cite[Definition 2.2.19]{talagrand2014upper} which is equivalent to Definition \ref{def:gamma_fun} up to a constant depending only on $\alpha$; see \cite[Section 2.3]{talagrand2014upper}.}

The $\gamma_{\alpha}$ functionals can be upper bounded in terms of the covering numbers of the set $\calS$. For any $\epsilon > 0$, denote $\calN(\calS, d, \epsilon)$ to be the minimum number of balls of radius $\epsilon$ (with centers in $\calS$) which are needed to cover $\calS$. Then, one can show\footnote{This can be deduced using  \cite[Corollary $2.3.2$]{talagrand2014upper}, and by replicating the arguments after the proof of \cite[Lemma 2.2.11]{talagrand2014upper} to general $\alpha \geq 1$.} 
that
\begin{equation} \label{eq:gamma_bound}
    \gamma_{\alpha}(\calS, d) \leq c_{\alpha} \int_{0}^{\diam(\calS)} \log^{1/\alpha} \calN(\calS, d, \epsilon) d\epsilon
\end{equation}
where $c_{\alpha} > 0$ depends only on $\alpha$, and $\diam(\calS)$ is the diameter of $\calS$.  
For $\alpha = 2$, the right-hand side (RHS) of \eqref{eq:gamma_bound} is the well-known Dudley entropy integral \cite{Dudley67}. In fact, by Talagrand's majorizing measure theorem \revo{\cite[Theorem 2.4.1]{talagrand2014upper}}, $\gamma_2(\calS,d)$ characterizes the expected suprema of centered Gaussian processes $(X_s)_{s \in \calS}$ as
\begin{equation} \label{eq:talag_maj_meas_thm}
  c \gamma_2(\calS,d) \leq \expec \sup_{s \in \calS}  X_s \leq C \gamma_2(\calS,d)
\end{equation}
for some universal constants $c, C > 0$, with the canonical distance $d(s,s') := (\expec[X_s - X_{s'}]^2)^{1/2}$. For example, if $\calS \subset \matR^{n \times m}$, and $X_s = \dotprod{G}{s}$ for a $n \times m$ matrix $G$ with iid standard Gaussian entries, we have $d(s,s') = \revj{\norm{s - s'}_F}$. Then \eqref{eq:talag_maj_meas_thm} implies $\expec \sup_{s \in \calS} \dotprod{G}{s} \asymp \gamma_2(\calS, \norm{\cdot}_F)$ where $\expec \sup_{s \in \calS} \dotprod{G}{s}$ is known as the \emph{Gaussian width} of the set $\calS$, denoted as $w(\calS)$. 
\paragraph{Tangent cone.}
One of our sample complexity bounds for estimating $A^*$ will depend on the local size of the \emph{tangent cone} of the set $\calK$ at $A^*$. 
\begin{definition}[Tangent cone] \label{def:tancone}
For a convex set $\calK$  and $A \in \calK$, the tangent cone at $A$ is defined as 
\begin{equation*}
    \calT_{\calK,A} := \cl\set{t(B - A): t \geq 0, \ B \in \calK}
\end{equation*}
where $\cl(\cdot)$ denotes the closure of a set.
\end{definition}
As we will see shortly, our first main result, namely Theorem \ref{thm:main_err_tangent_cone} below, will involve the gamma functionals $\gamma_1(\calT_{\calK,A^*} \cap \frobsphere, \norm{\cdot})$ and $\gamma_2(\calT_{\calK,A^*} \cap \frobsphere, \norm{\cdot})$ with $\norm{\cdot}$ corresponding to either the spectral or Frobenius norm. Small values of these terms will translate to weaker requirements on the sample size $T$ for accurately estimating $A^*$. By virtue of the earlier discussion, note that $\gamma_2(\calT_{\calK,A^*} \cap \frobsphere, \norm{\cdot}_F) \asymp w(\calT_{\calK,A^*} \cap \frobsphere)$.

\subsection{Main results}
Our first main result is the following theorem which bounds the estimation error $\norm{\est{A} - A^*}_F$. The proofs of our main results are detailed in Section \ref{sec:proof}.
\begin{theorem}[Tangent cone structure] \label{thm:main_err_tangent_cone}
There exist constants $C_1,C_2, C_3,C_4 > 0$ depending only on $L$ such that for any $\delta \in (0,1)$ and $B\in\calK$, if
$$T \geq C_1 J^4(A^*) \max\set{\gamma_2^2(\tanconeB \cap \frobsphere, \norm{\cdot}_2), \log^2(C_2/\delta)},$$
then it holds with probability at least $1-\delta$ that 
\begin{align*}
    \norm{\est{A} - A^*}_F 
    &\leq C_3 J(A^*)\left(\frac{\log(C_2/\delta) + \gamma_2(\tanconeB \cap \frobsphere, \norm{\cdot}_F)}{\sqrt{T}} + \frac{\gamma_1(\tanconeB \cap \frobsphere, \norm{\cdot}_2)}{T} \right) \\
    &+ C_4 J^2(A^*) \norm{A^* - B}_F .
\end{align*}
\end{theorem}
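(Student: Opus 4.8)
The plan is to run a constrained-least-squares ``basic inequality'' argument, with the extra difficulty that the design matrix and the noise are dependent, so the cross term becomes a genuine quadratic chaos rather than a conditionally Gaussian process. Write $X = [x_0,\dots,x_T]$, $X_+ = [x_1,\dots,x_{T+1}]$, $E = [\eta_1,\dots,\eta_{T+1}]$, so that $X_+ = A^* X + E$ and $\est{A} \in \argmin{A\in\calK}\norm{X_+ - AX}_F^2$. Since $\est{A}$ is optimal and $B$ feasible, $\norm{X_+ - \est{A}X}_F^2 \le \norm{X_+ - BX}_F^2$; substituting $X_+ = A^*X + E$, expanding, cancelling $\norm{E}_F^2$, and applying Young's inequality to the cross term yields, with $\hat\Delta := \est{A} - B \in \tanconeB$ and $D := A^* - B$,
\[
  \norm{\hat\Delta X}_F^2 \;\le\; 2\dotprod{\hat\Delta X}{E} + 2\dotprod{\hat\Delta X}{DX} \;\le\; 2\dotprod{\hat\Delta X}{E} + \tfrac12\norm{\hat\Delta X}_F^2 + 2\norm{DX}_F^2 .
\]
Writing $\hat\Delta = r\bar\Delta$ with $r = \norm{\hat\Delta}_F$, $\bar\Delta\in\tanconeB\cap\frobsphere$, and abbreviating $\sigma^2 := \inf_{\Delta\in\tanconeB\cap\frobsphere}\norm{\Delta X}_F^2$, $S := \sup_{\Delta\in\tanconeB\cap\frobsphere}\abs{\dotprod{\Delta X}{E}}$, this reduces to the scalar inequality $\sigma^2 r^2 \le 4Sr + 4\norm{DX}_F^2$, hence $r \lesssim S/\sigma^2 + \norm{DX}_F/\sigma$, and finally $\norm{\est{A}-A^*}_F \le r + \norm{D}_F$. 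It then remains to lower bound $\sigma^2$ and upper bound $S$ and $\norm{DX}_F$.

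All three are controlled through one observation: since $x_t = \sum_{s=1}^t (A^*)^{t-s}\eta_s$ is linear in $\xi := \vect(E)$, the map $\Delta\mapsto\Delta X$ is linear, so $\norm{\Delta X}_F^2 = \norm{\hat M_\Delta\xi}_2^2$ is a positive quadratic chaos and $\dotprod{\Delta X}{E} = \xi^\top M_\Delta\xi$ is a mean-zero off-diagonal quadratic chaos in the $L$-subgaussian vector $\xi$, where $\hat M_\Delta, M_\Delta$ are block-Toeplitz matrices with blocks of the form $\Delta(A^*)^{k}$, $k\ge0$. Such a block-Toeplitz matrix has operator norm at most $\sum_{k\ge0}\norm{\Delta(A^*)^k}_2 \le \norm{\Delta}_2 J(A^*)$, and (using $\expec\norm{\Delta X}_F^2 = \Tr(\Delta^\top\Delta\sum_{s=0}^{T-1}\Gamma_s(A^*))$ together with $I_n\preceq\Gamma_s(A^*)$ and $\norm{\Gamma_s(A^*)}_2\le J^2(A^*)$) Frobenius norm between $\sqrt T\norm{\Delta}_F$ and $\sqrt T J(A^*)\norm{\Delta}_F$; these estimates cleanly separate the dynamics from the geometry of $\calK$. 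For $\sigma^2$ I would apply the suprema-of-chaos concentration of Krahmer--Mendelson--Rauhut / Dirksen (Section~\ref{subsec:tools}) to $\sup_{\Delta\in\tanconeB\cap\frobsphere}\abs{\norm{\hat M_\Delta\xi}_2^2 - \expec\norm{\hat M_\Delta\xi}_2^2}$: its expectation is governed by $\gamma_2(\tanconeB\cap\frobsphere,\norm{\cdot}_2)$ scaled by the operator/Frobenius radii $J(A^*)$, $\sqrt T J(A^*)$, and its tail by $\log(C_2/\delta)$; the hypothesis $T\gtrsim J^4(A^*)\max\{\gamma_2^2(\tanconeB\cap\frobsphere,\norm{\cdot}_2),\log^2(C_2/\delta)\}$ is precisely what forces this deviation below $\tfrac12 T\le\tfrac12\expec\norm{\Delta X}_F^2$, giving $\sigma^2\ge T/2$ with probability $\ge1-\delta/3$. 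For the fixed matrix $D$, a Hanson--Wright bound on the single chaos $\norm{DX}_F^2=\norm{\hat M_D\xi}_2^2$ gives $\norm{DX}_F^2\lesssim\expec\norm{DX}_F^2 + (\text{lower order under }T\gtrsim\log(1/\delta))\lesssim T J^2(A^*)\norm{D}_F^2$ with probability $\ge1-\delta/3$.

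The crux is bounding $S$. Rather than conditioning on $X$ (which is \emph{not} independent of $E$), I treat $\Delta\mapsto\dotprod{\Delta X}{E}=\xi^\top M_\Delta\xi$ directly as a chaos process: by Hanson--Wright each increment $\dotprod{(\Delta-\Delta')X}{E}$ has a mixed (sub-gaussian plus sub-exponential) tail, with sub-gaussian scale $\norm{M_{\Delta-\Delta'}}_F\le\sqrt T J(A^*)\norm{\Delta-\Delta'}_F$ and sub-exponential scale $\norm{M_{\Delta-\Delta'}}_2\le J(A^*)\norm{\Delta-\Delta'}_2$, so the Bernstein-type generic chaining bound for processes with such increments (Dirksen) yields, with probability $\ge1-\delta/3$,
\[
  S \;\lesssim\; J(A^*)\Big(\sqrt T\,\gamma_2(\tanconeB\cap\frobsphere,\norm{\cdot}_F) + \gamma_1(\tanconeB\cap\frobsphere,\norm{\cdot}_2) + \sqrt T\,\log(C_2/\delta)\Big),
\]
where the Frobenius scale produces the $\gamma_2$ in the Frobenius metric and the operator scale the $\gamma_1$ in the spectral metric. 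Intersecting the three events and feeding $\sigma^2\ge T/2$, $\norm{DX}_F\lesssim\sqrt T J(A^*)\norm{D}_F$, and the bound on $S$ into $r\lesssim S/\sigma^2+\norm{DX}_F/\sigma$ and then into $\norm{\est{A}-A^*}_F\le r+\norm{D}_F$, using $J(A^*)\ge1$, produces the claimed estimate after relabelling $\delta$ and the constants (the stated coefficient $J^2(A^*)$ on $\norm{A^*-B}_F$ is a harmless weakening of the $J(A^*)$ this argument actually gives). I expect the main obstacle to be this last chaining step — obtaining the correct Hanson--Wright increment estimates and threading them through a mixed-tail chaining bound so that the two Talagrand functionals emerge with the \emph{right metrics} and the right powers of $J(A^*)$ and $T$; the block-Toeplitz norm estimates are what make that bookkeeping possible. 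A secondary technical point is verifying that the restricted-eigenvalue deviation genuinely drops below $T/2$ under the stated sample-size condition.
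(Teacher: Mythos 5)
Your proposal is correct and follows essentially the same route as the paper: the optimality/basic inequality, the block-Toeplitz representation $\vect(X)=\Gamma\xi$ with $\norm{\Gamma}_2\le J(A^*)$, the Krahmer--Mendelson--Rauhut bound for the positive chaos $\inf/\sup\norm{\Delta X}_F^2$, Dirksen's mixed-tail chaining for the mean-zero cross-term chaos, and the same Lipschitz transfer of the $\gamma_1,\gamma_2$ functionals from the lifted matrix set to $\tanconeB\cap\frobsphere$. The only deviations (Young's inequality in place of the variational first-order condition, and a single Hanson--Wright bound for $\norm{(A^*-B)X}_F$ instead of the uniform supremum) are cosmetic and, if anything, yield a slightly sharper coefficient on the $\norm{A^*-B}_F$ term.
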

In the formulation of this theorem a generic matrix $B \in \calK$ is introduced, and a natural choice for it is $B=A^*$, which minimizes the last term in RHS above. However, the shape of $\tanconeB$, which becomes important in evaluation of $\gamma_{\alpha}(\tanconeB \cap \frobsphere, \norm{\cdot}_2)$ for $\alpha=1,2$, may be more suitable for calculation if $B\ne A^*$ (but sufficiently `close' to $A^*$). We will illustrate this on \rev{Examples $1,2$ and $4$} in the next section. \rev{In particular, we will take $B = A^*$ in Examples $1$ and $2$. For Example $4$, we will require a particular construction of $B \neq A^*$ such that $\gamma_{\alpha}(\tanconeB \cap \frobsphere, \norm{\cdot}_F)$ is suitably ``small''.}

Our second main result is useful in situations where $\calK$ does not have a small tangent cone structure at $A^*$, or at points in $\calK$ sufficiently close to $A^*$.
%
%
\begin{theorem}[without tangent cone structure]\label{thm:no_tancone_Struc} 
There exist constants $C_1,C_2, C_3, C_4 > 0$ depending only on $L$ such that the following is true. For any $\delta \in (0,1)$, $x > 0$ and $B \in \calK$, suppose that 
\begin{equation*}
T \geq C_1 J^4(A^*) \max \set{\frac{\gamma_2^2((\calK-B) \cap x\frobball,\norm{\cdot}_2)}{x^2}, \log^2(C_2/\delta)}. 
\end{equation*}
Then with probability at least $1-\delta$, the estimate $\est{A}$ satisfies
\begin{align*}
\norm{\est{A} - A^*}_F &\leq C_3 J(A^*) \left[\frac{\log(C_2/\delta)}{\sqrt{T}} + \frac{\gamma_1((\calK-B) \cap x\frobball, \norm{\cdot}_2)}{T x} + \frac{\gamma_2((\calK-B) \cap x\frobball, \norm{\cdot}_F)}{\sqrt{T} x} \right] \\
&+ C_4 J^2(A^*) \norm{A^*-B}_F + x.
\end{align*}
\end{theorem}
Suppose we take $B = A^*$ and say $A^*$ lies in the interior of $\calK$. Then $\calT_{\calK,A^*} = \matR^{n \times n}$ which renders Theorem \ref{thm:main_err_tangent_cone} to be unhelpful. In such a situation, Theorem \ref{thm:no_tancone_Struc} can be meaningful in terms of capturing the local size of $\calK$ at $A^*$ at a suitably small scale $x = o(1)$ (as $T \rightarrow \infty$). The optimal choice of $x$ will depend on how the terms involving the $\gamma_1$  and $\gamma_2$ functionals scale with $x$. We will apply Theorem \ref{thm:no_tancone_Struc} on Example $3$ in the next section. Interestingly, we will see that both the $\gamma_1$ and $\gamma_2$ terms are bounded by quantities depending \emph{linearly} on $x$. In this case, the limit $x = 0$ is the optimal choice.

\begin{remark} \label{rem:main_thms}
  Both Theorems \ref{thm:main_err_tangent_cone} and \ref{thm:no_tancone_Struc} have analogues in the literature for recovering (in the $\ell_2$ norm) a structured signal $\beta^*$ in the linear model \eqref{eq:struc_signal_rec_ind}, where the design matrix $X$ and the noise $\eta$ are typically independent; see for instance \cite{neykov19b, planlasso16}. Indeed, our proofs are structured along the same lines as that in \cite{neykov19b} (which in turn follows ideas from \cite{planlasso16}). However, our linear model in \eqref{eq:lin_dyn_sys_mod} is different, especially due to the temporal correlation between $x_t$ and $\eta_{t},\eta_{t-1},\dots...\eta_{1}$. This leads to additional technical difficulties in the analysis -- as opposed to \cite{neykov19b, planlasso16} we cannot use Gordon's mesh theorem \cite{Gordon88} and need to \revj{instead} use a different set of concentration tools (see Section \ref{subsec:tools}) for controlling the terms in Lemma \ref{lem:ineq_first_ord_cond} (see Section \ref{subsec:proof_main_tancone}). One implication of this is the appearance of the $\gamma_1$ term in our bounds which can often be larger than the $\gamma_2$ term.
\end{remark}

\begin{remark} \label{rem:rel_work_comp}
\rev{
As discussed in Section \ref{subsec:rel_work}, existing works for learning structured LDS are typically focused on specific structural assumptions such as sparsity, low-rankness, group sparsity, or low-rank plus sparse matrices, and do not apply for general convex constraints $\calK$.} 

\rev{A work\footnote{We came across this work after the completion of the present paper.} closely related to ours is \cite{PGDstruct21}. For a convex regularizer $R(\cdot)$, it is assumed that $A^* \in \calK$ with $\calK := \set{A: R(A) \leq R(A^*)}$. The PGD method is shown to linearly converge (up to problem-dependent factors) to the `statistical error' $\frac{w(\calT_{\calK,A^*})}{\sqrt{T}}$ provided $T$ is at least of the order $w^2(\calT_{\calK,A^*})$ \cite[Theorem 1]{PGDstruct21}. There are some important points to be noted about this result.}
\begin{itemize}
    \item \revj{The matrix $A^*$ is assumed to be strictly stable, and the extremal eigenvalues ($\kappa_{\max}, \kappa_{\min}$) of the spectral density function of the autocovariance matrix of $(x_t)_{t \geq 1}$ play an important role in the analysis (and appear in the bounds as the ``condition number''). The usage of these quantities was motivated by the work \cite{Basu15}. It is shown that the matrix $\est{A}^{(k)}$ at the $k$th iteration of PGD satisfies the bound}
    \revj{
    \begin{equation*}
        \norm{\est{A}^{(k)} - A^*}_F \leq \rho^{k+1}\norm{\est{A}^{(0)} - A^*}_F + \frac{\xi}{1-\rho}
    \end{equation*}
    where $\xi$ is the statistical error, and $\rho < 1 - \frac{\kappa_{\min}}{2\kappa_{\max}}$ is ensured provided $T$ is suitably large. 
    } 
    
    \item \rev{The proof makes use of an inequality  $\gamma_1(\calS,\norm{\cdot}_F) \leq \gamma_2^2(\calS,\norm{\cdot}_F)$, for a general set $\calS$, which was stated in \cite[Lemma 2.7]{Melnyk16}. We could not validate the proof of this inequality. Without using the inequality, one can verify from the proof of \cite[Theorem 1]{PGDstruct21} that the statistical error therein would then be of the order $$\frac{\gamma_2(\calT_{\calK,A^*} \cap \frobsphere, \norm{\cdot}_F)}{\sqrt{T}} + \frac{\gamma_1(\calT_{\calK,A^*} \cap \frobsphere, \norm{\cdot}_2)}{T}$$ provided $T = \Omega(\gamma_2^2(\calT_{\calK,A^*} \cap \frobsphere), \norm{\cdot}_F)$. This is the same as in Theorem \ref{thm:main_err_tangent_cone} for $B = A^*$. While there are differences in the technical details between our proof of Theorem \ref{thm:main_err_tangent_cone} and that of \cite[Theorem 1]{PGDstruct21}, an important aspect of Theorem \ref{thm:main_err_tangent_cone} is that the bounds are stated in terms of tangent cone structure at $B$ (not necessarily equal to $A^*$) which can be useful in certain situations (we show this in Corollary \ref{cor:lipschitz_reg_examp} for Example $4$).
    }

    \item \rev{Upon inspection of the proof of \cite[Theorem 1]{PGDstruct21}, it seems possible that the analysis applies for general convex sets $\calK$ without any change. However this would have to be verified in detail.}
\end{itemize}
\end{remark}

\subsection{Instantiating our results on examples}
We now recall the examples from Section \ref{sec:intro} for which the bounds in Theorems \ref{thm:main_err_tangent_cone} and \ref{thm:no_tancone_Struc} can be made explicit in terms of $n$. The proofs are detailed in Section \ref{sec:proof_corr}.

\paragraph{Example $1$ ($d$-dimensional subspace).} In this case, observe from the definition of $\tanconeB$ that $\tanconeB = \calK$ for any choice of $B \in \calK$. One can then use standard covering number bounds to bound the $\gamma_{\alpha}$-functionals in Theorem \ref{thm:main_err_tangent_cone}. This leads to the following corollary of Theorem \ref{thm:main_err_tangent_cone} where we take $B = A^*$.
\begin{corollary} \label{cor:subspace}
Let $\calK \subset \matR^{n \times n}$ be a $d$-dimensional subspace. If $T \gtrsim J^4(A^*) \max\set{d, \log^2(1/\delta)}$ for $\delta \in (0,1)$, then it holds with probability at least $1-\delta$ that
\begin{equation*}
    \norm{\est{A} - A^*}_F \lesssim   J(A^*)\left(\frac{\log(1/\delta) + \sqrt{d}}{\sqrt{T}} \right).
\end{equation*}
\end{corollary}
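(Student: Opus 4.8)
The plan is to apply Theorem~\ref{thm:main_err_tangent_cone} with the choice $B = A^*$ and to control the three $\gamma_\alpha$-functionals that appear in its statement. The first step is a structural observation: since $\calK$ is a linear subspace, for every $B \in \calK$ the set $\set{t(B' - B) : t \geq 0,\ B' \in \calK}$ is already equal to $\calK$, which is a closed cone, so $\tanconeAstar = \calK$. Consequently $\tanconeAstar \cap \frobsphere = \calK \cap \frobsphere$ is isometric (as a metric space under either $\norm{\cdot}_F$ or $\norm{\cdot}_2$, the latter dominated by the former) to the Euclidean unit sphere in $\matR^d$.

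Next I would bound its covering numbers by the standard volumetric estimate: for every $\epsilon \in (0,1]$ one has $\calN(\calK \cap \frobsphere, \norm{\cdot}_F, \epsilon) \leq (1 + 2/\epsilon)^d \leq (3/\epsilon)^d$, while $\diam(\calK \cap \frobsphere) \leq 2$. Plugging this into the entropy bound \eqref{eq:gamma_bound} with $\alpha = 2$ gives
$$\gamma_2(\calK \cap \frobsphere, \norm{\cdot}_F) \lesssim \int_0^2 \sqrt{\log \calN(\calK \cap \frobsphere, \norm{\cdot}_F, \epsilon)}\, d\epsilon \lesssim \sqrt{d} \int_0^2 \sqrt{\log(3/\epsilon)}\, d\epsilon \lesssim \sqrt{d},$$
the last integral being a finite absolute constant. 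Using \eqref{eq:gamma_bound} with $\alpha = 1$ instead (so the integrand is $\log \calN$ rather than its square root) yields, by the same reasoning, $\gamma_1(\calK \cap \frobsphere, \norm{\cdot}_F) \lesssim d \int_0^2 \log(3/\epsilon)\, d\epsilon \lesssim d$. Finally, since $\norm{\cdot}_2 \leq \norm{\cdot}_F$ on $\matR^{n\times n}$, the monotonicity of $\gamma_\alpha$ in the metric (noted just after Definition~\ref{def:gamma_fun}, with constant $a=1$) gives $\gamma_2(\calK \cap \frobsphere, \norm{\cdot}_2) \leq \gamma_2(\calK \cap \frobsphere, \norm{\cdot}_F) \lesssim \sqrt{d}$ and $\gamma_1(\calK \cap \frobsphere, \norm{\cdot}_2) \leq \gamma_1(\calK \cap \frobsphere, \norm{\cdot}_F) \lesssim d$.

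With these estimates, the hypothesis of Theorem~\ref{thm:main_err_tangent_cone} reduces to $T \gtrsim J^4(A^*)\max\set{d, \log^2(1/\delta)}$, and taking $B = A^*$ kills the $\norm{A^* - B}_F$ term, so the theorem delivers, with probability at least $1-\delta$,
$$\norm{\est{A} - A^*}_F \lesssim J(A^*)\left(\frac{\log(1/\delta) + \sqrt{d}}{\sqrt{T}} + \frac{d}{T}\right).$$
To conclude I would absorb the last term: because $J(A^*) = \sum_{i \geq 0}\norm{(A^*)^i}_2 \geq \norm{I_n}_2 = 1$, the sample-size hypothesis forces $T \gtrsim d$, whence $\frac{d}{T} = \frac{\sqrt d}{\sqrt T}\cdot\frac{\sqrt d}{\sqrt T} \leq \frac{\sqrt d}{\sqrt T}$, collapsing the bound to the claimed $J(A^*)\big(\log(1/\delta) + \sqrt d\big)/\sqrt{T}$. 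The argument is essentially routine; the only points deserving a little care are extracting the correct power of $d$ from the two entropy integrals (linear for $\gamma_1$, square-root for $\gamma_2$), checking that both integrals converge near $\epsilon = 0$ (they do, since $\log(1/\epsilon)$ and $\sqrt{\log(1/\epsilon)}$ are integrable at the origin), and justifying the final absorption of the $O(1/T)$ term via the lower bound $J(A^*)\geq 1$.
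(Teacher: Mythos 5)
Your proposal is correct and follows essentially the same route as the paper: identify $\calT_{\calK,A^*}=\calK$, bound $\gamma_2(\calK\cap\frobsphere,\norm{\cdot}_F)\lesssim\sqrt{d}$ and $\gamma_1(\calK\cap\frobsphere,\norm{\cdot}_2)\leq\gamma_1(\calK\cap\frobsphere,\norm{\cdot}_F)\lesssim d$ via volumetric covering numbers and the entropy integral \eqref{eq:gamma_bound}, then invoke Theorem~\ref{thm:main_err_tangent_cone} with $B=A^*$. Your explicit absorption of the residual $d/T$ term into $\sqrt{d}/\sqrt{T}$ using $T\gtrsim d$ (via $J(A^*)\geq 1$) is a step the paper leaves implicit, and is a welcome addition.
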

In the unconstrained case where $\calK = \matR^{n \times n}$, so that $d = n^2$, \eqref{eq:Aest_convex} is the OLS estimator. Then Corollary \ref{cor:subspace} states that if $T \gtrsim J^4(A^*) \max\set{n^2, \log^2(1/\delta)}$, we have  
\begin{equation} \label{eq:unconstr_bd}
    \norm{\est{A} - A^*}_F \lesssim J(A^*)\left(\frac{\log(1/\delta) + n}{\sqrt{T}} \right).
\end{equation}
Existing error bounds in the literature for the OLS estimator are typically in the spectral norm, but can of course be converted to the Frobenius norm with an extra factor of $\sqrt{n}$. Indeed, the result of \cite{Jedra20} in \eqref{eq:err_bd_jedra} implies
%
%
%
\begin{equation} \label{eq:jedra_frob_bd}
  \norm{\est{A} - A^*}_F \leq \sqrt{n} \norm{\est{A} - A^*}_2 \lesssim \sqrt{\frac{n\log(1/\delta) + n^2}{T}}
\end{equation}
provided $T \gtrsim J^2(A^*)(\log(1/\delta) + n)$. The bound in \eqref{eq:jedra_frob_bd} is of the same order as in \eqref{eq:unconstr_bd}, barring the extra $J(A^*)$ term in our bound. Moreover, when $J(A^*)$ is a constant, note that $T$ needs to be at least of the order $n^2$ -- in both \eqref{eq:unconstr_bd} and \eqref{eq:jedra_frob_bd} -- in order to drive the error below a specified threshold. Of course, in case $d \ll n^2$, then the requirement $T \gtrsim d$ in Corollary \ref{cor:subspace} is relatively mild, as one would expect, given that $\calK$ has an intrinsic dimension $d$. 
%
%
\paragraph{Example $2$ ($\ell_1$ ball).} We now consider the setting where $A^*$ is $k$-sparse, i.e., has at most $k$ non-zero entries for some $1 \leq k \leq n^2$. A standard strategy for recovering sparse matrices (resp. vectors) is to take $\calK$ to be a suitably scaled ball in the $\norm{\cdot}_{1,1}$ (resp. $\ell_1$) norm. We take $\calK := \norm{A^*}_{1,1} \calB_{1,n}$ where
\begin{equation} \label{eq:l11_ball}
    \calB_{1,n} := \set{A \in \matR^{n \times n}: \norm{A}_{1,1} \leq 1}
\end{equation}
so that $A^*$ lies on the boundary of $\calK$. Then the tangent cone $\tanconeAstar$ has the form
\begin{equation*}
    \tanconeAstar := \cl\set{t U: \norm{A^* + U}_{1,1} \leq \norm{\revj{A^{*}}}_{1,1}, \ t \geq 0}.
\end{equation*}
One can bound $w(\tanconeAstar \cap \frobsphere)$ by using existing results in the literature \cite{rudelson08} (see also \cite{stojnic09,lotz14,chandra12,Tropp2015}) -- these results apply for vectors but can be directly invoked in our setting by treating $n \times n$ matrices as vectors in $\matR^{n^2}$. This leads to the same order-wise bound on $\gamma_2(\tanconeAstar \cap \frobsphere, \norm{\cdot}_F)$ due to \eqref{eq:talag_maj_meas_thm} with $\calS = \tanconeAstar \cap \frobsphere$. Furthermore, the $\gamma_1$ functional term can be bounded in terms of $w(\tanconeAstar \cap \frobsphere)$ using Sudakov's minoration inequality \cite[Theorem 7.4.1]{HDPbook}. These considerations lead to the following corollary of Theorem \ref{thm:main_err_tangent_cone}.
\begin{corollary} \label{cor:sparse_example}
    Suppose $A^*$ is $k$-sparse and $\calK = \norm{A^*}_{1,1} \calB_{1,n}$ where $\calB_{1,n}$ is the unit ball defined in \eqref{eq:l11_ball}, and denote $\beta(n,k) := \sqrt{k \log(n^2/k) + k}$. For any $\delta \in (0,1)$, if 
    $$T \gtrsim J^4(A^*) \max\set{\beta^2(n,k), \log^2(1/\delta)}$$
    then with probability at least $1-\delta$ it holds that 
    \begin{equation*}
       \norm{\est{A} - A^*}_F \lesssim J(A^*) \left(\frac{\log(1/\delta) + \beta(n,k)}{\sqrt{T}} + \frac{n \beta(n,k) [1 + \log(\frac{n}{\beta(n,k)})]}{T}\right).
    \end{equation*}
   \end{corollary}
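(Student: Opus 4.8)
The plan is to specialize Theorem~\ref{thm:main_err_tangent_cone} to $\calK = \norm{A^*}_{1,1}\calB_{1,n}$ with the choice $B = A^*$, which makes the last term $J^2(A^*)\norm{A^* - B}_F$ vanish, so that it only remains to bound the three complexity quantities $\gamma_2(\calS, \norm{\cdot}_2)$, $\gamma_2(\calS, \norm{\cdot}_F)$ and $\gamma_1(\calS, \norm{\cdot}_2)$, where $\calS := \tanconeAstar \cap \frobsphere$. Throughout I would identify an $n\times n$ matrix with its vectorization in $\matR^{n^2}$, so that $\norm{\cdot}_{1,1}$ becomes the $\ell_1$ norm, $\norm{\cdot}_F$ the Euclidean norm, and $\tanconeAstar$ is precisely the descent cone of the $\ell_1$ norm at the $k$-sparse vector $A^*$.

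For the two $\gamma_2$ terms I would first record the standard description of this descent cone: with $S = \supp(A^*)$, $\abs{S} = k$, every $U \in \tanconeAstar$ satisfies $\norm{U_{S^c}}_{1,1} \le \norm{U_S}_{1,1} \le \sqrt{k}\,\norm{U_S}_F$, so that $\calS$ is contained in $\set{U \in \matR^{n\times n} : \norm{U}_{1,1} \le 2\sqrt{k},\ \norm{U}_F = 1}$. By the majorizing measure theorem~\eqref{eq:talag_maj_meas_thm} one has $\gamma_2(\calS, \norm{\cdot}_F) \asymp w(\calS)$, and the Gaussian width of the $\ell_1$ descent cone at a $k$-sparse vector in $\matR^{n^2}$ is bounded by $O(\sqrt{k\log(n^2/k) + k}) = O(\beta(n,k))$ by well-known estimates (e.g.\ \cite{rudelson08,chandra12,Tropp2015}, applied in $\matR^{n^2}$); hence $\gamma_2(\calS, \norm{\cdot}_F) \lesssim \beta(n,k)$. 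Since $\norm{\cdot}_2 \le \norm{\cdot}_F$, the monotonicity of $\gamma_\alpha$ under a pointwise comparison of metrics (as recorded after Definition~\ref{def:gamma_fun}) gives $\gamma_2(\calS, \norm{\cdot}_2) \le \gamma_2(\calS, \norm{\cdot}_F) \lesssim \beta(n,k)$ as well. Substituting these into Theorem~\ref{thm:main_err_tangent_cone} already produces the stated sample-size requirement $T \gtrsim J^4(A^*)\max\set{\beta^2(n,k), \log^2(1/\delta)}$ and the first error term $J(A^*)(\log(1/\delta) + \beta(n,k))/\sqrt{T}$.

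The main work is the bound on $\gamma_1(\calS, \norm{\cdot}_2)$. Using $\norm{\cdot}_2 \le \norm{\cdot}_F$ again and then the entropy-integral bound~\eqref{eq:gamma_bound} with $\alpha = 1$, I would write $\gamma_1(\calS, \norm{\cdot}_2) \le \gamma_1(\calS, \norm{\cdot}_F) \lesssim \int_0^{\diam(\calS)} \log \calN(\calS, \norm{\cdot}_F, \epsilon)\, d\epsilon$ with $\diam(\calS) \le 2$, and then control the covering number by two complementary bounds: Sudakov minoration \cite[Theorem~7.4.1]{HDPbook} gives $\log\calN(\calS, \norm{\cdot}_F, \epsilon) \lesssim w^2(\calS)/\epsilon^2$, which is useful away from $\epsilon = 0$, while the crude volumetric bound in the ambient space gives $\log\calN(\calS, \norm{\cdot}_F, \epsilon) \le n^2\log(3/\epsilon)$, which is what must be used for very small $\epsilon$. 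Splitting the integral at the threshold $\epsilon_0 \asymp w(\calS)/n$ and summing the two contributions yields $\gamma_1(\calS, \norm{\cdot}_2) \lesssim n\, w(\calS)\bigl(1 + \log(n/w(\calS))\bigr)$; since $t \mapsto t(1 + \log(n/t))$ is nondecreasing on $(0, n]$ and $w(\calS) \lesssim \beta(n,k) \le n$, this is $\lesssim n\,\beta(n,k)\bigl(1 + \log(n/\beta(n,k))\bigr)$, which is the second error term. Plugging all three estimates into Theorem~\ref{thm:main_err_tangent_cone} (with the last term absent) gives the corollary.

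I expect the delicate point to be this last step: for $\alpha = 1$ the entropy integral $\int_0^{\diam(\calS)}\log\calN\,d\epsilon$ diverges near $\epsilon = 0$ if one uses either the Sudakov estimate or a naive dimension count by itself, so the split point must be chosen so that the volumetric and Sudakov pieces are balanced, and one must bookkeep the logarithmic factor carefully so that it ends up appearing linearly, as $1 + \log(n/\beta(n,k))$, rather than squared or under a root. A secondary thing to be careful about is invoking the off-the-shelf Gaussian-width estimate for the $\ell_1$ descent cone with the correct ambient dimension $n^2$ (rather than $n$) and sparsity $k$, so that the quantity $\beta(n,k)$ emerges exactly as defined in the statement.
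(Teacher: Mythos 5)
Your proposal is correct and follows essentially the same route as the paper: the same Gaussian-width bound $w(\tanconeAstar\cap\frobsphere)\lesssim\beta(n,k)$ for the two $\gamma_2$ terms via \eqref{eq:talag_maj_meas_thm}, and the same Sudakov-plus-volumetric splitting of the Dudley integral at $\epsilon\asymp\beta(n,k)/n$ for the $\gamma_1$ term (the paper splits directly at $\beta(n,k)/n$ rather than at $w(\calS)/n$ followed by a monotonicity argument, but this is immaterial). The only quibble is your side remark that the volumetric entropy bound alone makes the $\alpha=1$ integral diverge near zero --- it actually converges and simply yields the weaker bound $n^2$ --- but this does not affect the proof.
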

   As a sanity check, note that when $k = n^2$ then $\beta(n,k) = n$ and we recover the statement of Corollary \ref{cor:subspace} with $d = n^2$. This is expected since $A^*$ does not possess any additional structure, and hence the constraint $\calK$ -- the purpose of which is to promote sparse solutions -- does not provide any benefit. The non-trivial sparsity regime is when $k = o(n^2)$. Consider for instance the case where $k \asymp n$. Then, we have $\beta(n,k) \asymp \sqrt{n \log n}$ and Corollary \ref{cor:sparse_example} gives the error bound
   \begin{equation} \label{eq:sparse_rec_bd_1}
       \norm{\est{A} - A^*}_F \lesssim J(A^*) \left(\frac{\log(1/\delta) + \sqrt{n \log n}}{\sqrt{T}} + \frac{(n \log n)^{3/2}}{T} \right)
   \end{equation}
   provided $T \gtrsim J^4(A^*) \max\set{n \log n, \log^2(1/\delta)}$. Notice that while this condition on $T$ implies the error bound in \eqref{eq:sparse_rec_bd_1}, we actually need $T$ to be at least of the order $J(A^*) (n \log n)^{3/2}$ to drive the error in \eqref{eq:sparse_rec_bd_1} below a specified threshold. This ``gap'' is of course due to the term $(n \log n)^{3/2}/T$ in \eqref{eq:sparse_rec_bd_1} -- this term arises from the bound on $\gamma_1(\tanconeAstar \cap \frobsphere, \norm{\cdot}_F)$ within the proof of the corollary and is \revj{suboptimal}. \revj{In general, the bound obtained in the corollary has a suboptimal dependence on $n$, as the optimal (order-wise) rate is expected to be $\sqrt{\frac{k \log(n^2/k)}{T}}$.} Nevertheless, the bound in \eqref{eq:sparse_rec_bd_1} clearly has a milder dependence on $n$ as compared to that obtained for the OLS in \eqref{eq:unconstr_bd}.
   \begin{remark}\label{rem:subopt_sparse_example}
    \revj{If $\calK$ is the $\ell_1$ ball and $A^*$ is $k$-sparse, one could also do a more ``specific'' analysis tailored to this setting by proceeding along similar lines as in the i.i.d case  for the $\ell_1$-constrained LASSO estimator \cite[Theorem 7.13]{wainwright2019high}. Taking $B = A^*$, then by following the steps in the proof of \cite[Theorem 7.13]{wainwright2019high}, we would have that $\est{A} - A^*$ lies in the cone $\calC(\calS,1)$ where $\calS := \text{supp}(A^*)$, and
    \begin{equation*}
    \calC(\calS,r):= \set{U \in \matR^{n \times n}: \norm{(U)_{\calS^c}}_{1,1} \leq r \norm{(U)_{\calS}}_{1,1}}, \quad r > 0. 
\end{equation*}
Subsequently, we would then bound the term $\dotprod{(\est{A} -  A^*) X}{E}$ appearing in Lemma \ref{lem:ineq_first_ord_cond} using H\"older's inequality by $\norm{\est{A} -  A^*}_{1,1} \norm{\vect(X^\top E)}_{\infty}$. Note that $\est{A} -  A^* \in \calC(\calS,1)$ clearly implies $\norm{\est{A} -  A^*}_{1,1} \lesssim \sqrt{k}\norm{\est{A} -  A^*}_F$. By then bounding the term $\norm{\vect(E X^\top )}_{\infty}$ (this term appears, e.g., in \cite{Basu15}, and can also be bounded using arguments in the proof of \cite[Lemma 2]{donnat2026jointlearningnetworklinear}) one would likely improve the bound in Corollary \ref{cor:sparse_example}. However, it is unclear whether we can modify the analysis for general $\calK$ so that instantiating it for this setting leads to (near-) optimal error rates.
}

\revj{As noted in Remark \ref{rem:main_thms}, the proof outline of our results (Theorems \ref{thm:main_err_tangent_cone} and \ref{thm:no_tancone_Struc}) is along the lines of \cite{neykov19b} which focused on the i.i.d regression case for general convex constraints. Interestingly, the ``problematic'' $\gamma_1$ term arising in our analysis does not arise in \cite{neykov19b} -- the local size of $\calK$ is captured entirely by the Gaussian width. Hence instantiating \cite[Theorem 2.3]{neykov19b} for the case where $\beta^*$ is sparse and $\calK$ is the $\ell_1$ ball would lead to the optimal $\ell_2$ error rate.}

   \end{remark}
   \begin{remark}\label{rem:comp_sparseA_lit}
    \revj{The work \cite{Basu15} studies the $\ell_1$ penalized LASSO estimator for normally distributed $(\eta_t)_{t \geq  1}$, when $A^*$ is $k$-sparse and strictly stable. In this case, they obtain obtain the rate (in our notation) $\norm{\est{A} - A^*}_F \lesssim \sqrt{\frac{k \log n}{T}}$ which matches the usual $\ell_2$-rate for estimating sparse vectors in the i.i.d setting. The proof technique follows the usual roadmap for $\ell_1$ regularized least squares problems (see, e.g., \cite[Chapter 7]{wainwright2019high}), and involves showing that $\est{A} - A^*$ lies (w.h.p) in the cone $\calC(\calS,3)$, 
    provided the regularization parameter is chosen suitably. Then, the main effort goes in showing the restricted eigenvalue condition over this cone, using techniques tailored to its structure. Overall, the analysis for $\ell_1$ penalization avoids the appearance of the $\gamma_1(\cdot, \cdot)$ term which leads to a sharp error bound.}
\end{remark}

   \paragraph{Example $3$ (convex regression).}  
   We will now illustrate the use of Theorem \ref{thm:no_tancone_Struc} in the context of convex regression.  Consider the case where $\calK$ is the collection of matrices formed by sampling bivariate convex functions on a square grid of $\Omega = [0,1/\sqrt{2}]^2$, i.e.,
   \begin{align} \label{eq:K_bivariate_convex}
        \calK := \set{A \in \matR^{n \times n}: A_{ij} = f\left(\frac{i-1}{(n-1)\sqrt{2}}, \frac{j-1}{(n-1)\sqrt{2}} \right) \revj{,\; i,j\in[n],} \text{ for some } f:\Omega \rightarrow \matR \text{ convex }}.
   \end{align}
   Then the following corollary of Theorem \ref{thm:no_tancone_Struc} is obtained taking $B = A^*$ with $A^*$ formed by sampling a convex function with a simple (i.e., piecewise affine) structure. 
   \begin{corollary} \label{cor:convex_reg_biv}
     Let $\Omega_1,\dots,\Omega_K$ be convex subsets of $\Omega = [0,1/\sqrt{2}]^2$ which (a) form a partition of $\Omega$, and (b) are such that each $\Omega_i$ is an intersection of at most $s$ pairs of parallel halfspaces. Suppose $A^* \in \calK$ for some convex $f_0$ which is affine on each $\Omega_i$. Then there exists a constant $c > 0$ such that the following is true. For any $\delta \in (0,1)$, if  
     \begin{equation*}
       \frac{n^2}{\log^s n} \geq K c^{s} \ \text{ and } \ T \gtrsim J^4(A^*) \max \set{c^s K \log^s n, \log^2\left(\frac{1}{\delta}\right)},
     \end{equation*}
     then with probability at least $1-\delta$, 
     \begin{equation*}
      \norm{\est{A} - A^*}_F \lesssim J(A^*) \left(\frac{K c^s \log^s n \log(\frac{n^2}{c^s K \log^s n})}{T} + \frac{\log(1/\delta) + c^{s/2} K^{1/2} \log^{s/2} n}{\sqrt{T}}\right).   
     \end{equation*}
   \end{corollary}
   The proof uses metric entropy estimates from \cite[Section 4]{Kur2020ConvexRI} for a collection of bivariate convex functions lying within a ball of radius $x$ around $f_0$. This then leads to bounds for the $\gamma_1$ and $\gamma_2$ functionals appearing in Theorem \ref{thm:no_tancone_Struc}. Interestingly, we find for the specified $A^*$ that  
\begin{align*}
 \gamma_1((\calK - A^*) \cap x\frobball, \norm{\cdot}_2)  &\lesssim  x c^s K \log^s n \log\left(\frac{n^2}{c^s K \log^s n} \right), \\
\gamma_2((\calK-A^*) \cap x\frobball, \norm{\cdot}_2) 
&\lesssim x c^{s/2} K^{1/2} \log^{s/2} n.
\end{align*}
Hence the only dependence on $x$ in the error bound of Theorem \ref{thm:no_tancone_Struc} is through the last term, and since the bound holds for any $x > 0$, we can take the limit $x = 0$.
   
The scaling factor of $\sqrt{2}$ in the definition of $\Omega$ essentially ensures that $\Omega$ lies within the unit $\ell_2$ ball in $\matR^2$, which is needed in order to use the results of \cite{Kur2020ConvexRI}. Note that in order to drive the error below a threshold, it suffices to have 
   \begin{equation*}
       T \gtrsim J^4(A^*) \max \set{c^s K \log^s n \log \left(\frac{n^2}{c^s K \log^s n} \right), \log^2\left(\frac{1}{\delta}\right)}.
   \end{equation*}
   If, e.g., $s$ and $K$ are constants (i.e., $f_0$ has a simple structure), then 
   $$T \gtrsim J^4(A^*) \max\set{\text{polylog}(n), \log^2\left(\frac{1}{\delta}\right)}$$ 
   is sufficient, which is much milder then the requirement for OLS. 

   \paragraph{\rev{Example $4$ (Lipschitz regression).}} 
   \rev{For this example, we illustrate the applicability of Theorem \ref{thm:main_err_tangent_cone} when $B \neq A^*$.  Let $f_1,\dots,f_n$ be unknown Lipschitz functions where $f_i:[0,1] \rightarrow \matR$ and 
   \begin{equation*}
       \abs{f_i(x) - f_i(y)} \leq L_i \abs{x - y}; \ \quad \forall x,y \in [0,1], 
   \end{equation*}
   for some $L_i \geq 0$. We then assume that each row of $A^*$ is formed by uniformly sampling $f_i$ with a step size $1/T$, i.e., 
   \begin{equation*}
     A^*_{i,j} = f_i(j/T); \quad j=1,\dots,n, 
   \end{equation*}  
   with $T \geq n$. Hence we consider the constraint set $\calK$ to be 
    \begin{equation} \label{eq:lips_reg_set}
       \calK := \set{A \in \matR^{n \times n}:  \abs{A_{i,j} - A_{i,j+1}} \leq \frac{L_i}{T}, \ i \in [n], \ j \in [n-1]}.
    \end{equation}
    This example is essentially motivated by that proposed in \cite[Section 3.1]{neykov19b} which considered $\beta^* \in \matR^n$ to be formed by sampling an unknown Lipschitz function. The techniques we will subsequently employ in the proof of Corollary \ref{cor:lipschitz_reg_examp} below are essentially adaptations of those used in \cite[Corollary 3.4]{neykov19b}. The main idea is to show the existence of $B$ which is a good approximation of $A^*$, and each row of which is formed by sampling a piecewise linear function with few pieces. For ease of exposition, we will outline our final result using $L$ such that $L_i \leq L$ for each $i$.} 
%
\begin{corollary}\label{cor:lipschitz_reg_examp}
    Suppose $A^* \in \calK$ with $\calK$ as in \eqref{eq:lips_reg_set} and let $L_i \leq L$ for each $i$. For any $\delta \in (0,1)$, if
    \begin{equation} \label{eq:T_bd_lipreg_fin}
       T \gtrsim \revj{J^4(A^*) \max \set{\log^2(1/\delta), L^{1/2} n^{3/2} (\log n)^{1/2}, n^{4/3} L^{2/3}}}    
    \end{equation}
    then with probability at least $1-\delta$, it holds that  
    \revj{
    \begin{align*}
     \norm{\est{A} - A^*}_F &\lesssim J^2(A^*) \Bigg[ \frac{\log(1/\delta)}{\sqrt{T}} + \left(\frac{L^{1/3} n (\log n)^{1/3}}{T^{2/3}} \right) \left(\max\set{1,\frac{(n \log n)^{2/3}}{T^{1/3}} }\right) \Bigg].
    \end{align*}}
\end{corollary}
Let us examine the requirement on $T$ which ensures that $\norm{\est{A} - A^*}_F \leq \epsilon$ for any $\epsilon \in (0,1)$. Apart from the condition on $T$ in \eqref{eq:T_bd_lipreg_fin}, one can verify that we additionally require
\begin{equation*}
  T \gtrsim \revj{J^4(A^*) \max \set{\frac{\log^2(1/\delta)}{\epsilon^2},  L^{1/2} \frac{n^{3/2} (\log n)^{1/2}}{\epsilon^{3/2}}, 
  L^{1/3} \frac{n^{5/3} \log n }{\epsilon}}.}
\end{equation*}
In particular, this means that accurate estimation of $A^*$ is possible with $T = o(n^2)$ samples. 

\section{Proof of main results} \label{sec:proof}
We begin by outlining in Section \ref{subsec:tools} some important concentration tools used in the proof of our main results. Section \ref{subsec:proof_main_tancone} contains the proof of Theorem \ref{thm:main_err_tangent_cone} while Section \ref{subsec:proof_main_no_tancone} contains the proof of Theorem \ref{thm:no_tancone_Struc}.

\subsection{Technical tools} \label{subsec:tools}
For a set of matrices $\calA$, let us define the terms 
\begin{equation} \label{eq:rad_frob_spec}
d_F(\calA) = \sup_{A \in \calA} \norm{A}_F, \quad d_2(\calA) = \sup_{A \in \calA} \norm{A}_2,
\end{equation}
which can be thought of as other types of complexity measures of the set $\calA$ (the ``radius'' of $\calA$). 

The following result from \cite{krahmer14} provides a concentration bound for the suprema of second order subgaussian chaos processes involving positive semidefinite (p.s.d) matrices. 
\begin{theorem}[\cite{krahmer14}] \label{thm:krahmer_chaos}
Let $\calA$ be a set of matrices and $\xi$ be a vector whose entries are independent, zero-mean, variance $1$, and are $L$-subgaussian random variables. Denote
\begin{align*}
P &= \gamma_2(\calA, \norm{\cdot}_2) [\gamma_2(\calA, \norm{\cdot}_2) + d_F(\calA)] + d_F(\calA) d_2(\calA) \\
V &= d_2(\calA)[\gamma_2(\calA, \norm{\cdot}_2) + d_F(\calA)], \quad \text {and} \quad U = d_2^2(\calA)    
\end{align*}
where $d_2, d_F$ are as in \eqref{eq:rad_frob_spec}. Then there exist constants $c_1,c_2 > 0$ depending only on $L$ such that for any $t > 0$ it holds that 
\begin{equation*}
    \prob\left(\sup_{A \in \calA} \abs{\norm{A \xi}_2^2 - \expec[\norm{A \xi}_2^2]} \geq c_1 P + t \right) \leq 2\exp\left(-c_2 \min \set{\frac{t^2}{V^2}, \frac{t}{U}} \right).
\end{equation*}
\end{theorem}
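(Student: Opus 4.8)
The plan is to recognize $\norm{A\xi}_2^2 = \xi^\top A^\top A \xi$ as a homogeneous chaos of order two in the independent subgaussian entries of $\xi$, and to control the centered process $X_A := \norm{A\xi}_2^2 - \expec[\norm{A\xi}_2^2]$ uniformly over $A \in \calA$ by generic chaining. Writing $\xi = (\xi_1,\dots,\xi_N)$ and $M_A = A^\top A$, I would first center and split
\[
X_A = \sum_i (M_A)_{ii}(\xi_i^2 - 1) + \sum_{i \neq j}(M_A)_{ij}\xi_i\xi_j ,
\]
so that the diagonal part is a sum of independent centered subexponential variables (controlled by Bernstein's inequality through $\norm{M_A}_F$ and $\norm{M_A}_2$) while the off-diagonal part is the genuine chaos. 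Decoupling (replacing the second factor by an independent copy $\xi'$ at the cost of universal constants) reduces the off-diagonal term to $\sum_{i,j}(M_A)_{ij}\xi_i \xi_j'$, which conditionally on $\xi'$ is a linear subgaussian process in $\xi$.

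The key to the chaining is the increment estimate. For $A, B \in \calA$ one has $X_A - X_B = \xi^\top(M_A - M_B)\xi - \expec[\xi^\top(M_A - M_B)\xi]$, a centered quadratic form, so by the Hanson--Wright inequality
\[
\prob(\abs{X_A - X_B} > t) \leq 2\exp\left(-c\min\left\{\frac{t^2}{L^4\norm{M_A - M_B}_F^2}, \frac{t}{L^2\norm{M_A - M_B}_2}\right\}\right).
\]
Using the factorization $M_A - M_B = A^\top(A - B) + (A - B)^\top B$ together with submultiplicativity of the Frobenius and spectral norms gives $\norm{M_A - M_B}_F \leq 2 d_F(\calA)\norm{A-B}_2$ and $\norm{M_A - M_B}_2 \leq 2 d_2(\calA)\norm{A-B}_2$. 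Thus the increments obey a mixed sub-Gaussian/sub-exponential tail in which both the $\psi_2$ and the $\psi_1$ ``metric'' are proportional to $\norm{A-B}_2$, scaled respectively by the radii $d_F(\calA)$ and $d_2(\calA)$.

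With these increment bounds in hand I would invoke Talagrand's generic-chaining theorem for quadratic chaos, in the refined tail-sensitive form of Dirksen. For the expectation, the theorem bounds $\expec \sup_A \abs{X_A}$ by the chaos complexity $\gamma_2(\calA,\norm{\cdot}_2)[\gamma_2(\calA,\norm{\cdot}_2) + d_F(\calA)]$ plus the single-anchor contribution $\norm{M_{A_0}}_F \leq d_2(\calA) d_F(\calA)$, which is exactly $P$. The same chaining, now carried out with the mixed Hanson--Wright increments, produces the two-regime tail with the sub-Gaussian scale $V = d_2(\calA)[\gamma_2(\calA,\norm{\cdot}_2) + d_F(\calA)]$ governing small deviations and the sub-exponential scale $U = d_2^2(\calA)$ governing large deviations, matching the exponent $\min\{t^2/V^2, t/U\}$.

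The main obstacle is establishing the chaos term $\gamma_2(\calA,\norm{\cdot}_2)^2$ and, crucially, doing so without a $\gamma_1$ functional appearing in $P$. After decoupling, the off-diagonal chaos conditioned on $\xi'$ is a subgaussian process in $\xi$ with random weight vector $M_A \xi'$; bounding its supremum requires one layer of chaining in $\xi$ whose variance weights must themselves be controlled uniformly in $A$ by a second layer of chaining in $\xi'$. It is this nested, two-level chaining that multiplies two $\gamma_2(\calA,\norm{\cdot}_2)$ factors together, and arranging the admissible sequences so that the sub-exponential increments always fall in the regime controlled by the operator-norm radius $d_2(\calA)$ — rather than forcing a genuine $\gamma_1$ term — is the delicate step. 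Verifying that the resulting tail carries precisely the advertised parameters $P$, $V$, and $U$, rather than merely order-correct surrogates, is where I expect the bookkeeping to be heaviest.
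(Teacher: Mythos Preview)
The paper does not prove this theorem at all: it is quoted verbatim as a result from \cite{krahmer14} and used as a black box in the proof of Lemma~\ref{lem:psd_chaos_conc_bounds}. There is therefore no ``paper's own proof'' to compare your proposal against.

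That said, your outline is broadly faithful to how the result is actually established in \cite{krahmer14}. The ingredients you identify --- writing the centered chaos as a quadratic form in $M_A = A^\top A$, deriving Hanson--Wright-type increment bounds with $\norm{M_A - M_B}_F \lesssim d_F(\calA)\norm{A-B}_2$ and $\norm{M_A - M_B}_2 \lesssim d_2(\calA)\norm{A-B}_2$, and then chaining with mixed subgaussian/subexponential tails --- are exactly the right ones. You are also correct that the crux is the two-scale chaining argument that yields the $\gamma_2(\calA,\norm{\cdot}_2)^2$ term in $P$ \emph{without} a $\gamma_1$ functional, and that this is where the real work lies. One small circularity to watch: you propose to ``invoke Talagrand's generic-chaining theorem for quadratic chaos, in the refined tail-sensitive form of Dirksen,'' but Dirksen's tail bound (Theorem~\ref{thm:dirksen_chaos_conc} here) is itself a close cousin of the very statement you are trying to prove; in the original argument of \cite{krahmer14} the nested chaining is carried out by hand rather than by appeal to a later, more general theorem.
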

We will also use the following result from \cite{dirksen15} for bounding the suprema of general second order subgaussian chaos processes, where the matrices are not necessarily p.s.d. The bound on moments in part $1$ is stated on page $15$ in \cite{dirksen15}; part $2$ follows by passing from moment bounds to tail bounds in a standard manner via Markov's inequality, see for example \cite[Lemma A.1]{dirksen15}.
\begin{theorem}[\cite{dirksen15}] \label{thm:dirksen_chaos_conc}
Let $\calA$ be a set of matrices and $\xi$ be a vector whose entries are independent, zero-mean, $1$-subgaussian random variables. For $A \in \calA$, denote $C_A(\xi) := \xi^\top A \xi - \expec[\xi^\top A \xi]$. With $d_2, d_F$ as in \eqref{eq:rad_frob_spec}, the following is true.
\begin{enumerate}
    \item There exists a universal constant $c > 0$ such that for any $p \geq 1$,
    \begin{align*}
        \left(\expec \sup_{A \in \calA} \abs{C_A(\xi)}^p \right)^{1/p} 
        &\leq  c \left(\gamma_1(\calA,\norm{\cdot}_2) + \gamma_2(\calA,\norm{\cdot}_F) +\sqrt{p} d_F(\calA) + p d_2(\calA) \right) 
        \\ 
        &= c \varrho(\calA,p).
    \end{align*}

    \item There exists a universal constant $c' > 0$ (depending on $c$) such that for any $u \geq 1$, 
    \begin{equation*}
        \prob \left(\sup_{A \in \calA} \abs{C_A(\xi)} \geq c' \varrho(\calA,u) \right) \leq e^{-u}.
    \end{equation*}
\end{enumerate}
\end{theorem}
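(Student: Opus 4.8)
The plan is to prove Theorem~\ref{thm:dirksen_chaos_conc} by \emph{generic chaining} applied to the centered quadratic process $A \mapsto C_A(\xi)$, using an increment bound with a \emph{mixed} subgaussian/subexponential tail. The starting point is that for any $A,B \in \calA$ the increment $C_A(\xi) - C_B(\xi) = C_{A-B}(\xi)$ is a single centered quadratic form in $\xi$, and the Hanson--Wright inequality controls it by the two natural metrics induced by $\calA$: the Frobenius metric governs the subgaussian regime and the spectral metric the subexponential regime. Concretely, I would first establish that there are universal constants $c_1,c_2>0$ such that for every matrix $M$ and every $t>0$,
$$\prob\left(\abs{C_M(\xi)} \geq t\right) \leq 2 \exp\left(-c_2 \min\set{\frac{t^2}{c_1 \norm{M}_F^2}, \frac{t}{c_1 \norm{M}_2}}\right),$$
and then feed this two-metric increment condition into a chaining argument that produces $\gamma_2(\calA,\norm{\cdot}_F)$ from the subgaussian part and $\gamma_1(\calA,\norm{\cdot}_2)$ from the subexponential part.

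For the increment bound I would split $C_M(\xi) = \sum_{i \neq j} M_{ij}\xi_i\xi_j + \sum_i M_{ii}(\xi_i^2 - 1)$. The off-diagonal part I would decouple, replacing it up to a universal constant by the bilinear form $\sum_{i,j} M_{ij}\xi_i\xi_j'$ with $\xi'$ an independent copy; conditioning on $\xi'$ this is a linear subgaussian form in $\xi$ with variance proxy $\norm{M\xi'}_2^2$, and a further concentration step for $\norm{M\xi'}_2$ around $\norm{M}_F$ (with spectral-norm-controlled fluctuations) yields the claimed mixed tail. The diagonal part is a sum of independent centered subexponential variables $M_{ii}(\xi_i^2-1)$, so Bernstein's inequality gives a mixed tail with parameters $\sum_i M_{ii}^2 \leq \norm{M}_F^2$ and $\max_i \abs{M_{ii}} \leq \norm{M}_2$; combining the two parts gives the displayed Hanson--Wright bound.

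The core of the argument is the chaining step. I would fix a base point $A_0 \in \calA$ and take two admissible sequences realizing, up to constants, $\gamma_1(\calA,\norm{\cdot}_2)$ and $\gamma_2(\calA,\norm{\cdot}_F)$ respectively; merging them yields a single admissible sequence $(\calA_r)$ with nearest-point maps $\pi_r$. Writing $C_A(\xi) - C_{A_0}(\xi)$ as a telescoping sum of increments $C_{\pi_r(A)}(\xi) - C_{\pi_{r-1}(A)}(\xi)$ and applying the mixed tail at each scale, the subgaussian part of each increment is summed against $2^{r/2}$ weights to produce $\gamma_2(\calA,\norm{\cdot}_F)$, while the subexponential part is summed against $2^{r}$ weights to produce $\gamma_1(\calA,\norm{\cdot}_2)$; at moment level $p$ the high-deviation overflow at each scale contributes the extra diameter terms. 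This is exactly the generic-chaining bound for a two-parameter Bernstein increment condition, and it gives
$$\left(\expec \sup_{A \in \calA} \abs{C_A(\xi) - C_{A_0}(\xi)}^p\right)^{1/p} \lesssim \gamma_1(\calA,\norm{\cdot}_2) + \gamma_2(\calA,\norm{\cdot}_F) + \sqrt{p}\, d_F(\calA) + p\, d_2(\calA).$$
Finally I would add back the single-point term: applying Hanson--Wright to $M = A_0$ and integrating its tail gives $(\expec \abs{C_{A_0}(\xi)}^p)^{1/p} \lesssim \sqrt{p}\,\norm{A_0}_F + p\,\norm{A_0}_2 \leq \sqrt{p}\, d_F(\calA) + p\, d_2(\calA)$, and a triangle inequality in $L^p$ combines this with the chaining bound to yield part~1. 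Part~2 follows from part~1 by the standard moment-to-tail conversion via Markov's inequality with $p \asymp u$, as noted in the statement.

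The step I expect to be the main obstacle is the mixed-tail chaining: one must run the generic-chaining sum against \emph{two} metrics simultaneously and, at each scale, partition the contribution of an increment into its subgaussian and subexponential pieces so that they accumulate against the correct weights ($2^{r/2}$ versus $2^{r}$) and assemble into $\gamma_2(\calA,\norm{\cdot}_F)$ and $\gamma_1(\calA,\norm{\cdot}_2)$ respectively. Keeping the $p$-dependent overflow sharp, so that it collapses to exactly $\sqrt{p}\,d_F(\calA) + p\,d_2(\calA)$ rather than something larger, is the delicate bookkeeping that makes the moment formulation, and hence the clean tail bound in part~2, go through.
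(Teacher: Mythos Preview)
Your sketch is a correct outline of how Theorem~\ref{thm:dirksen_chaos_conc} is actually proved in the literature (Hanson--Wright for increments plus a two-metric generic chaining bound, followed by Markov's inequality for the tail). However, the paper does \emph{not} prove this theorem: it is quoted as a black-box result from \cite{dirksen15}. The paper merely records that part~1 is stated on page~15 of \cite{dirksen15}, and that part~2 is obtained from part~1 by the standard moment-to-tail conversion via Markov's inequality (referring to \cite[Lemma~A.1]{dirksen15}).

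So the comparison is asymmetric: the paper cites; you prove. Your plan is in fact faithful to Dirksen's own argument --- the mixed-tail increment condition from Hanson--Wright feeding into a chaining bound that produces $\gamma_2$ from the subgaussian scale and $\gamma_1$ from the subexponential scale is exactly the mechanism. If anything, your identification of the ``main obstacle'' (running the chaining sum simultaneously against two metrics and keeping the $p$-dependent overflow sharp) is accurate: this is precisely the content of Dirksen's general chaining theorem for processes with mixed tails, and is not something one can improvise in a paragraph. For the purposes of this paper, though, none of that machinery needs to be reproduced; the theorem is invoked, not derived.
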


%
\subsection{Proof of Theorem \ref{thm:main_err_tangent_cone}} \label{subsec:proof_main_tancone}
Let us define the matrices (each of size $n \times T$)
\begin{align*}
    \Xtil = [x_2 \cdots x_{T+1}], \ X = [x_1 \cdots x_T], \ \text{ and } E = [\eta_2 \cdots \eta_{T+1}],
\end{align*}
so that $\Xtil = A^* X + E$ with $x_0 = 0$ and $x_1 = \eta_1$. Then, \eqref{eq:Aest_convex} can be rewritten as 
\begin{equation} \label{eq:Aest_convex_matform}
    \est{A} \in \argmin{A \in \calK} \norm{\Xtil - A X}_F^2.
\end{equation}

\paragraph{Step 1.} Our starting point is the following inequality which follows from first-order optimality conditions for constrained convex programs.
\begin{lemma} \label{lem:ineq_first_ord_cond}
For any $B \in \calK$ the solution $\est{A}$ of \eqref{eq:Aest_convex_matform} satisfies
\begin{equation*}
    \norm{(\est{A} - B)X}_F^2 \leq \dotprod{(\est{A} -  B) X}{E} + \norm{(A^* - B) X}_F \norm{(\est{A} - B) X}_F.
\end{equation*}
\begin{proof}
We first expand $\norm{\Xtil - \est{A} X}_F^2$ as 
\begin{align}
   \norm{\Xtil - \est{A} X}_F^2 
   &= \norm{\Xtil - B X}_F^2 + \norm{(B - \est{A}) X}_F^2 + 2\dotprod{\Xtil - B X}{(B - \est{A}) X} \nonumber \\
   &= \norm{\Xtil - B X}_F^2 - \norm{(B - \est{A}) X}_F^2 + 2\dotprod{\Xtil - \est{A} X}{(B - \est{A}) X} \nonumber \\
   &\leq \norm{\Xtil - B X}_F^2 - \norm{(B - \est{A}) X}_F^2 \label{eq:temp_1}
\end{align}
where the last inequality follows due to $\dotprod{\Xtil - \est{A} X}{(B - \est{A}) X} \leq 0$ by the first order optimality condition for $\est{A}$. 
Using $\Xtil = A^* X + E$, we obtain 
\begin{align*}
 \norm{\Xtil - \est{A} X}_F^2 &= \norm{(\est{A} - A^*) X}_F^2 + \norm{E}_F^2 + 2\dotprod{(A^* - \est{A})X}{E},   \\
 \norm{\Xtil - B X}_F^2 - \norm{(B - \est{A}) X}_F^2 &= \norm{(A^* - B)X}_F^2 + \norm{E}_F^2 \\ &+ 2\dotprod{(A^*-B)X}{E} - \norm{(B-\est{A})X}_F^2.
\end{align*}
Plugging these expressions in \eqref{eq:temp_1} leads to the inequality
\begin{equation}
\norm{(A^* - \est{A})X}_F^2 \leq \norm{(A^* - B) X}_F^2 + 2\dotprod{(\est{A} - B) X}{E} - \norm{(B - \est{A}) X}_F^2. \label{eq:temp_2}
\end{equation}
Expanding the LHS of \eqref{eq:temp_2} leads to the lower bound
\begin{align*}
\norm{(A^* - \est{A})X}_F^2 
&= \norm{(A^* - B)X}_F^2 + \norm{(B - \est{A})X}_F^2 + 2\dotprod{(A^*-B)X}{(B-\est{A})X)} \\
&\geq \norm{(A^* - B)X}_F^2 + \norm{(B - \est{A})X}_F^2 - 2\norm{(A^* - B)X}_F \norm{(B - \est{A})X}_F,    
\end{align*}
and plugging this in \eqref{eq:temp_2} readily leads to the stated inequality in the lemma.
\end{proof}
\end{lemma}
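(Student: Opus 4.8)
The plan is to derive the inequality purely from the first-order optimality of $\est{A}$ together with elementary expansions of Frobenius norms, organizing everything around the three residual matrices $(\est{A}-B)X$, $(A^*-B)X$, and $E$. The only substantive ingredient is the optimality condition. Since $\est{A}$ minimizes the convex quadratic $f(A) = \norm{\Xtil - AX}_F^2$ over the convex set $\calK$, and $B \in \calK$, convexity keeps the segment $\est{A} + t(B-\est{A})$ feasible for $t \in [0,1]$, so the one-sided derivative of $f$ along this segment at $t=0$ is nonnegative. Computing $\nabla f(A) = -2(\Xtil - AX)X^\top$ and using the adjoint identity $\dotprod{MX^\top}{N} = \dotprod{M}{NX}$, this reads $\dotprod{\Xtil - \est{A}X}{(B - \est{A})X} \leq 0$; this is the only place feasibility of $B$ and optimality of $\est{A}$ enter.

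First I would expand $\norm{\Xtil - \est{A}X}_F^2$ by inserting $\pm BX$, writing $\Xtil - \est{A}X = (\Xtil - BX) + (B - \est{A})X$, and then rewrite the cross term using $\Xtil - BX = (\Xtil - \est{A}X) - (B-\est{A})X$ so that the optimality inner product surfaces explicitly. Discarding that nonpositive term yields the intermediate bound $\norm{\Xtil - \est{A}X}_F^2 \leq \norm{\Xtil - BX}_F^2 - \norm{(B-\est{A})X}_F^2$, recorded as \eqref{eq:temp_1}.

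Next I would substitute $\Xtil = A^*X + E$ into both squared norms in \eqref{eq:temp_1}, expand each into three terms, cancel the shared $\norm{E}_F^2$, and combine the two noise cross terms into $2\dotprod{(\est{A}-B)X}{E}$, arriving at \eqref{eq:temp_2}. Finally, to turn the left-hand side into the target $\norm{(\est{A}-B)X}_F^2$, I would expand $\norm{(A^*-\est{A})X}_F^2$ via $(A^*-\est{A})X = (A^*-B)X + (B-\est{A})X$ and lower-bound the cross term by Cauchy--Schwarz, namely $\dotprod{(A^*-B)X}{(B-\est{A})X} \geq -\norm{(A^*-B)X}_F\,\norm{(B-\est{A})X}_F$. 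Inserting this lower bound into \eqref{eq:temp_2} cancels the $\norm{(A^*-B)X}_F^2$ terms, adds the two copies of $\norm{(B-\est{A})X}_F^2$, and after dividing by two leaves exactly the claimed inequality.

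There is no genuine analytic difficulty; the statement is a deterministic ``basic inequality'' and its content is entirely bookkeeping. The two places demanding care are the sign and matrix form of the gradient in the optimality step --- reversing either would flip the crucial inequality --- and the orientation of the Cauchy--Schwarz bound, which must be applied so as to furnish a \emph{lower} bound on the expansion of $\norm{(A^*-\est{A})X}_F^2$; this is precisely what lets the $\norm{(A^*-B)X}_F^2$ terms cancel and the final estimate close.
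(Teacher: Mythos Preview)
Your proposal is correct and follows essentially the same route as the paper's proof: the same first-order optimality inequality $\dotprod{\Xtil - \est{A}X}{(B-\est{A})X} \leq 0$, the same intermediate bound \eqref{eq:temp_1}, the substitution $\Xtil = A^*X + E$ to reach \eqref{eq:temp_2}, and the final Cauchy--Schwarz lower bound on the expansion of $\norm{(A^*-\est{A})X}_F^2$. There is nothing to add.
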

Our next goal is to bound the terms appearing in Lemma \ref{lem:ineq_first_ord_cond}. 
\paragraph{Step 2.}  Consider first the term $\norm{(\est{A} - B) X}_F^2$ which can be written as
\begin{equation*}
  \norm{(\est{A} - B) X}_F^2 = \sum_{t=0}^{T} \norm{(\est{A} - B)x_t}_2^2 = \norm{(I_T \otimes (\est{A} - B)) \vect(X)}_2^2.
\end{equation*}%
One can verify that $\vect(X) = \Gamma \xi$ where 
\begin{equation*}
\Gamma =  \begin{bmatrix}
   I_n & 0 & \hdots & 0\\ 
   A^* & I_n & \hdots & 0\\ 
  \vdots  &  & \ddots & \vdots\\
  (A^*)^{T-1} & \hdots & A^* & I_n
 \end{bmatrix} \in \matR^{Tn \times Tn}, \quad 
 \xi = \begin{bmatrix}
   \eta_1 \\ 
   \eta_2 \\ 
  \vdots \\
  \eta_T
 \end{bmatrix} \in \matR^{Tn},
\end{equation*}
and so, we can write $\norm{(\est{A} - B) X}_F^2 = \norm{(I_T \otimes (\est{A} - B)) \Gamma \xi}_2^2$. As shown in \cite{Jedra20}, we can bound $\norm{\Gamma}_2 \leq J(A^*)$; recall its definition from  \eqref{eq:stab_param}. Now let us define the set
\begin{equation*}
    \tanconeBtil := \set{(I_T \otimes A) \Gamma: A \in \tanconeB \cap \frobsphere};
\end{equation*}
we then clearly have 
$$\frac{(I_T \otimes (\est{A} - B)) \Gamma}{\norm{\est{A} - B}_F} = \left(I_T \otimes \left(\frac{\est{A} - B}{\norm{\est{A} - B}_F} \right)\right) \Gamma \in \tanconeBtil$$ 
since $\est{A} - B \in \tanconeB$. Therefore we can bound the term $\norm{(\est{A} - B) X}_F^2$ as 
\begin{equation} \label{eq:temp_3}
    \left( \inf_{W \in \tanconeBtil} \norm{W \xi}_2^2 \right) \norm{\est{A} - B}_F^2 \leq \norm{(\est{A} - B) X}_F^2 \leq \norm{\est{A} - B}_F^2 \left(\sup_{W \in \tanconeBtil} \norm{W \xi}_2^2 \right).
\end{equation}
The term $\norm{W \xi}_2^2$ is a second order subgaussian chaos involving p.s.d matrices, and we wish to control its infimum and supremum over the set $\tanconeBtil$ in \eqref{eq:temp_3}. This is done using Theorem \ref{thm:krahmer_chaos} and leads to the following lemma.
\begin{lemma} \label{lem:psd_chaos_conc_bounds}
Denote $\Ubar, \Vbar, \Pbar$ as
\begin{align*}
    \Ubar &= J^2(A^*), \quad \Vbar = J(A^*)\left[\gamma_2(\tanconeBtil, \norm{\cdot}_2) + \sqrt{T} J(A^*)\right], \\
    \Pbar &= \gamma_2(\tanconeBtil, \norm{\cdot}_2)\left[\gamma_2(\tanconeBtil, \norm{\cdot}_2) + \sqrt{T} J(A^*) \right] + \sqrt{T} J^2(A^*).
\end{align*}
Then there exist constants $c_1, c_2 > 0$ depending only on $L$ such that for any $t > 0$, it holds with probability at least $1 - 2\exp(-c_2 \min\set{\frac{t^2}{\Vbar^2}, \frac{t}{\Ubar}})$ that 
\begin{enumerate}
    \item $\inf_{W \in \tanconeBtil} \norm{W \xi}_2^2 \geq T - c_1 \Pbar - t$, and

    \item $\sup_{W \in \tanconeBtil} \norm{W \xi}_2^2 \leq T J^2(A^*) + c_1 \Pbar + t$. 
\end{enumerate}
\end{lemma}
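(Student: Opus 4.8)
The plan is to apply Theorem \ref{thm:krahmer_chaos} directly to the set $\tanconeBtil$, using $\xi$ with independent, zero-mean, unit-variance, $L$-subgaussian entries exactly as in the hypothesis of that theorem. First I would compute the relevant complexity parameters of $\tanconeBtil$. Since every $W = (I_T \otimes A)\Gamma$ with $A \in \tanconeB \cap \frobsphere$, and $\norm{I_T \otimes A}_2 = \norm{A}_2 \leq \norm{A}_F = 1$, while $\norm{\Gamma}_2 \leq J(A^*)$, we get $d_2(\tanconeBtil) = \sup_W \norm{W}_2 \leq J(A^*)$. For the Frobenius radius, $\norm{(I_T \otimes A)\Gamma}_F \leq \norm{I_T \otimes A}_2 \norm{\Gamma}_F$; but $\norm{\Gamma}_F$ could be as large as $\sqrt{T}\,J(A^*)$-ish, so more carefully I would write $\norm{(I_T\otimes A)\Gamma}_F^2 = \sum_{j} \norm{(I_T \otimes A)\Gamma e_j}_2^2$ and bound each column, or simply use $\norm{(I_T\otimes A)\Gamma}_F \le \norm{\Gamma}_2 \norm{I_T \otimes A}_F = J(A^*)\sqrt{T}\norm{A}_F = \sqrt T\, J(A^*)$, giving $d_F(\tanconeBtil) \leq \sqrt{T}\, J(A^*)$. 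Finally, the map $A \mapsto (I_T \otimes A)\Gamma$ is linear with $\norm{(I_T\otimes A)\Gamma - (I_T \otimes A')\Gamma}_2 \le \norm{A - A'}_2 \norm{\Gamma}_2 \le J(A^*)\norm{A-A'}_2$, so by the Lipschitz/scaling property of $\gamma_\alpha$ noted after Definition \ref{def:gamma_fun}, $\gamma_2(\tanconeBtil, \norm{\cdot}_2) \leq J(A^*)\gamma_2(\tanconeB \cap \frobsphere, \norm{\cdot}_2)$ — although for the statement as written it is enough to keep $\gamma_2(\tanconeBtil, \norm{\cdot}_2)$ as is.

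Next I would substitute these bounds into the definitions of $P, V, U$ from Theorem \ref{thm:krahmer_chaos}. With $d_2 \le J(A^*)$, $d_F \le \sqrt T J(A^*)$, and writing $g_2 := \gamma_2(\tanconeBtil, \norm{\cdot}_2)$ for brevity, we get $U = d_2^2 \le J^2(A^*) = \Ubar$; $V = d_2(g_2 + d_F) \le J(A^*)[g_2 + \sqrt T J(A^*)] = \Vbar$; and $P = g_2(g_2 + d_F) + d_F d_2 \le g_2[g_2 + \sqrt T J(A^*)] + \sqrt T J^2(A^*) = \Pbar$. Since the tail bound in Theorem \ref{thm:krahmer_chaos} is monotone in $V, U$ (smaller is better) and the deviation term $c_1 P$ only increases the threshold when $P$ is replaced by the upper bound $\Pbar$, the inequality $\prob(\sup_{W} |\norm{W\xi}_2^2 - \expec \norm{W\xi}_2^2| \ge c_1 \Pbar + t) \le 2\exp(-c_2 \min\{t^2/\Vbar^2, t/\Ubar\})$ follows.

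It then remains to control the expectation $\expec[\norm{W\xi}_2^2]$ uniformly. Since the entries of $\xi$ are independent with unit variance, $\expec[\norm{W\xi}_2^2] = \norm{W}_F^2$. For part 2, I bound this above by $d_F^2(\tanconeBtil) \le T J^2(A^*)$, so on the good event $\sup_W \norm{W\xi}_2^2 \le TJ^2(A^*) + c_1\Pbar + t$. For part 1, I need a lower bound on $\norm{W}_F^2 = \norm{(I_T\otimes A)\Gamma}_F^2$; here I would use that $\Gamma$ has $I_n$ on its block-diagonal, so $\norm{(I_T\otimes A)\Gamma}_F^2 \ge \sum_{\ell=0}^{T-1}\norm{A (A^*)^{0}}_F^2$-type lower bound — more precisely, the $(\ell,\ell)$ block of $(I_T \otimes A)\Gamma$ equals $A$, hence $\norm{(I_T \otimes A)\Gamma}_F^2 \ge \sum_{\ell=1}^{T} \norm{A}_F^2 = T$ since $\norm{A}_F = 1$. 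Thus $\expec[\norm{W\xi}_2^2] \ge T$ for every $W \in \tanconeBtil$, and on the good event $\inf_W \norm{W\xi}_2^2 \ge T - c_1\Pbar - t$. Both statements hold simultaneously on the same event (the deviation bound is two-sided), which completes the proof.

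The main obstacle — really the only non-mechanical point — is correctly identifying the lower bound $\expec[\norm{W\xi}_2^2] = \norm{W}_F^2 \ge T$ by exploiting the identity-block structure of $\Gamma$; the rest is bookkeeping: matching the crude radius and $\gamma_2$ estimates for $\tanconeBtil$ to the definitions of $\Ubar, \Vbar, \Pbar$ and invoking monotonicity of the Krahmer–Mendelson–Rauhut tail bound. One should be slightly careful that $\gamma_2(\tanconeBtil, \norm{\cdot}_2)$ appears literally in $\Ubar, \Vbar, \Pbar$ (rather than its $J(A^*)$-scaled surrogate), so no scaling step is actually needed there — the bounds are stated in terms of the $\gamma_2$ functional of the transformed set itself.
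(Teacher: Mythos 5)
Your proposal is correct and follows essentially the same route as the paper: apply Theorem \ref{thm:krahmer_chaos} to $\tanconeBtil$ with the radius bounds $d_2 \le J(A^*)$, $d_F \le \sqrt{T}J(A^*)$, check that $\Ubar,\Vbar,\Pbar$ dominate $U,V,P$, and sandwich $\expec[\norm{W\xi}_2^2]=\norm{W}_F^2$ between $T$ (via the identity diagonal blocks of $\Gamma$, so each diagonal block of $(I_T\otimes A)\Gamma$ is $A$ itself) and $TJ^2(A^*)$. The paper's proof is the same argument, including the observation that $A$ appears $T$ times on the block diagonal for the lower bound.
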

\begin{proof}
We will use Theorem \ref{thm:krahmer_chaos} to obtain the stated bounds. Let us first bound the terms $d_F(\tanconeBtil)$, $d_2(\tanconeBtil)$ as follows.
\begin{equation} \label{eq:df_bound_temp1}
    d_F(\tanconeBtil) = \sup_{W \in \tanconeBtil} \norm{W}_F = \sup_{A \in \tanconeB \cap \frobsphere} \norm{(I_T \otimes A) \Gamma}_F \leq J(A^*) \sqrt{T}
\end{equation}
since $\norm{(I_T \otimes A) \Gamma}_F \leq \norm{\Gamma}_2 \norm{I_T \otimes A}_F \leq J(A^*) \sqrt{T}$ for any $A \in \tanconeB \cap \frobsphere$. Furthermore, 
\begin{equation} \label{eq:d2_bound_temp1}
d_2(\tanconeBtil) = \sup_{W \in \tanconeBtil} \norm{W}_2 = \sup_{A \in \tanconeB \cap \frobsphere} \norm{(I_T \otimes A) \Gamma}_2 \leq J(A^*)
\end{equation}
since $\norm{(I_T \otimes A) \Gamma}_2 \leq \norm{A}_2 \norm{\Gamma}_2 \leq \norm{A}_F \norm{\Gamma}_2 \leq J(A^*)$ for all $A \in \tanconeB \cap \frobsphere$. 

Next, we use almost matching bounds on $\expec[\norm{(I_T \otimes A) \Gamma \xi}_2^2]$ holding uniformly over $A \in \tanconeB \cap \frobsphere$ as
\begin{equation*}
    \expec[\norm{(I_T \otimes A) \Gamma \xi}_2^2] = \norm{(I_T \otimes A) \Gamma}_F^2 = \ \ 
    \begin{cases}
       \leq T J^2(A^*) \\
       \geq T \norm{A}_F^2 = T
    \end{cases}
\end{equation*}
where for the lower bound we used the fact that $A$ appears $T$ times within $(I_T \otimes A) \Gamma$. Finally, one can readily verify that the terms $\Pbar, \Ubar, \Vbar$ are (resp.) upper bounds for $P, U$ and $V$, the latter terms defined in Theorem \ref{thm:krahmer_chaos}. This concludes the proof. 
\end{proof}
Hence the event in Lemma \ref{lem:psd_chaos_conc_bounds} implies the bounds 
\begin{align}
    \norm{(\est{A} - B) X}_F^2 &\geq \norm{\est{A} - B}_F^2 (T - c_1 \Pbar - t), \label{eq:temp_bd_11} \\
    \norm{(\est{A} - B) X}_F &\leq \norm{\est{A} - B}_F (T J^2(A^*) + c_1 \Pbar + t)^{1/2}, \label{eq:temp_bd_12} \\
     \text { and } \ \norm{(A^* - B) X}_F &\leq \norm{A^* - B}_F (T J^2(A^*) + c_1 \Pbar + t)^{1/2}. \label{eq:temp_bd_13}
\end{align}
\paragraph{Step 3.} Our goal now is to control the term $\dotprod{(\est{A} -  B) X}{E}$. We can first bound it as 
\begin{equation*}
    \dotprod{(\est{A} -  B) X}{E} = \norm{\est{A} -  B}_F \left\langle\left(\frac{\est{A} -  B}{\norm{\est{A} -  B}_F}\right) X, E \right \rangle \leq  \norm{\est{A} -  B}_F \left(\sup_{A \in \tanconeB \cap \frobsphere} \dotprod{AX}{E} \right).
\end{equation*}
It remains to control the supremum term, which in fact is the supremum of a second order subgaussian chaos involving matrices that are not necessarily p.s.d (as will be seen in the proof below). This is achieved via Theorem \ref{thm:dirksen_chaos_conc} and leads to the following lemma.
\begin{lemma} \label{lem:gen_chaos_conc_bds}
For any $u \geq 1$, we have with probability at least $1 - \exp(-u)$, 
\begin{equation*}
    \sup_{A \in \tanconeB \cap \frobsphere} \abs{\dotprod{AX}{E}} \leq 
    c_3 \left(u J(A^*) + \sqrt{u T} J(A^*) + \gamma_1(\tanconeBtil, \norm{\cdot}_2) + \gamma_2(\tanconeBtil, \norm{\cdot}_F) \right)
\end{equation*}
for some constant $c_3 > 0$ depending only on $L$.
\end{lemma}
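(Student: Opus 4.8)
The plan is to rewrite the inner product $\dotprod{AX}{E}$ so that it takes the form of a second-order chaos $C_M(\xitil)$ for a suitable matrix $M$ depending on $A$, and then invoke Theorem \ref{thm:dirksen_chaos_conc}. Concretely, using $X = [x_1 \cdots x_T]$ and $E = [\eta_2 \cdots \eta_{T+1}]$, I would write $\dotprod{AX}{E} = \sum_{t=1}^{T} \dotprod{A x_t}{\eta_{t+1}} = \sum_{t=1}^T \eta_{t+1}^\top A x_t$. Since $\vect(X) = \Gamma \xi$ with $\xi = (\eta_1^\top,\dots,\eta_T^\top)^\top$, and since $\eta_{T+1}$ is independent of $\xi$, I would augment the noise vector: let $\xitil = (\eta_1^\top,\dots,\eta_{T+1}^\top)^\top \in \matR^{(T+1)n}$, whose entries are still independent, zero-mean, unit-variance, $L$-subgaussian. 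Then $\sum_{t=1}^T \eta_{t+1}^\top A x_t$ is a bilinear form in $\xitil$ (linear in $(\eta_2,\dots,\eta_{T+1})$ and linear in $\xi$ through $\Gamma$), hence can be expressed as $\xitil^\top M_A \xitil$ for an explicit matrix $M_A$; moreover $\expec[\xitil^\top M_A \xitil] = 0$ because the two ``blocks'' of $\xitil$ it couples are disjoint (the $\eta_{t+1}$ block is independent of the $\xi$ block), so $\dotprod{AX}{E} = C_{M_A}(\xitil)$. It is convenient to symmetrize $M_A$ (replace it by $(M_A + M_A^\top)/2$), which changes neither the quadratic form nor its expectation, but may change the spectral/Frobenius norms only up to a factor of $2$.

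Next I would bound the complexity parameters of the set $\calM := \set{M_A : A \in \tanconeB \cap \frobsphere}$ needed to apply Theorem \ref{thm:dirksen_chaos_conc}, namely $d_2(\calM)$, $d_F(\calM)$, $\gamma_1(\calM, \norm{\cdot}_2)$ and $\gamma_2(\calM, \norm{\cdot}_F)$. The matrix $M_A$ essentially has a single off-diagonal block equal to $(I_T \otimes A)\Gamma$ (or its transpose, up to the symmetrization and padding with a row/column block of zeros corresponding to $\eta_{T+1}$ / $\eta_1$). For a block off-diagonal matrix $\begin{bmatrix} 0 & N \\ 0 & 0\end{bmatrix}$, both the spectral and Frobenius norms equal those of $N$, and symmetrizing only costs a constant. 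Hence $\norm{M_A}_2 \asymp \norm{(I_T\otimes A)\Gamma}_2$ and $\norm{M_A}_F \asymp \norm{(I_T\otimes A)\Gamma}_F$, so by exactly the estimates \eqref{eq:d2_bound_temp1} and \eqref{eq:df_bound_temp1} already established in Lemma \ref{lem:psd_chaos_conc_bounds}'s proof we get $d_2(\calM) \lesssim J(A^*)$ and $d_F(\calM) \lesssim J(A^*)\sqrt{T}$. Likewise, the map $A \mapsto M_A$ is (up to a constant) an isometry from $(\tanconeB \cap \frobsphere, \norm{(I_T\otimes\cdot)\Gamma}_{\sharp})$ into $(\calM, \norm{\cdot}_{\sharp})$ for $\sharp \in \set{2,F}$, i.e. the metric on $\calM$ is comparable to the pushforward of the metric on $\tanconeBtil$; combined with the monotonicity property $d_1 \le a d_2 \Rightarrow \gamma_\alpha(\calS,d_1) \le a\gamma_\alpha(\calS,d_2)$ and the fact that $\gamma_\alpha$ is (up to constants) invariant under isometries of the index set, this yields $\gamma_1(\calM,\norm{\cdot}_2) \lesssim \gamma_1(\tanconeBtil,\norm{\cdot}_2)$ and $\gamma_2(\calM,\norm{\cdot}_F) \lesssim \gamma_2(\tanconeBtil,\norm{\cdot}_F)$.

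Finally, I would plug these four bounds into $\varrho(\calM, u) = \gamma_1(\calM,\norm{\cdot}_2) + \gamma_2(\calM,\norm{\cdot}_F) + \sqrt{u}\, d_F(\calM) + u\, d_2(\calM)$ from Theorem \ref{thm:dirksen_chaos_conc}, part 2, which gives, with probability at least $1 - e^{-u}$,
\[
\sup_{A \in \tanconeB \cap \frobsphere} \abs{\dotprod{AX}{E}} \lesssim \gamma_1(\tanconeBtil,\norm{\cdot}_2) + \gamma_2(\tanconeBtil,\norm{\cdot}_F) + \sqrt{u}\, J(A^*)\sqrt{T} + u\, J(A^*),
\]
which is exactly the claimed bound after absorbing constants into $c_3$. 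The main obstacle — really the only non-routine point — is the first step: carefully identifying $M_A$, checking that $\expec[C_{M_A}(\xitil)] = 0$ (this rests on the independence of $\eta_{T+1}$ from $\xi$, and more generally on the block-off-diagonal structure that makes the quadratic form a genuine ``chaos'' with zero mean), and keeping track of the harmless constant factors introduced by symmetrization and by padding $\xi$ out to $\xitil$. Everything afterward reduces the complexity parameters of $\calM$ to those of $\tanconeBtil$, which were already controlled in the proof of Lemma \ref{lem:psd_chaos_conc_bounds}.
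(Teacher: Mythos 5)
Your proposal is correct and follows essentially the same route as the paper: rewrite $\dotprod{AX}{E}$ as the quadratic form $\eta^\top M_A \eta$ in the stacked noise vector with $M_A$ having the single off-diagonal block $(I_T\otimes A)\Gamma$, observe the chaos is centered, transfer $d_2, d_F, \gamma_1, \gamma_2$ from $\calM$ to $\tanconeBtil$, and apply Theorem \ref{thm:dirksen_chaos_conc}. The only cosmetic difference is your optional symmetrization of $M_A$, which the paper skips since Theorem \ref{thm:dirksen_chaos_conc} does not require symmetric matrices.
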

\begin{proof}
We begin by rewriting $\dotprod{AX}{E}$ as 
\begin{align*}
    \dotprod{AX}{E} = \dotprod{A^\top E}{X} = \dotprod{(I_T \otimes A^\top)\xitil}{\Gamma \xi} = \xitil^\top ((I_T \otimes A) \Gamma) \xi \ \text{ where } \ \xitil = \vect(E) = \begin{bmatrix}
   \eta_2 \\ 
   \eta_3 \\ 
  \vdots \\
  \eta_{T+1}
 \end{bmatrix} \in \matR^{Tn}.
\end{align*}
%
%
Denoting $\eta \in \matR^{(T+1)n}$ to be the vector formed by stacking $\eta_1,\dots,\eta_{T+1}$, we can further simplify $\xitil^\top ((I_T \otimes A) \Gamma) \xi$ as 
\begin{equation*}
    \xitil^\top ((I_T \otimes A) \Gamma) \xi 
    = \eta^\top \underbrace{\begin{bmatrix}
   0 &  0 \\ 
   (I_T \otimes A)\Gamma &  0
 \end{bmatrix}}_{M_A} \eta 
 = \eta^\top M_{A} \eta. 
\end{equation*}
This in particular implies $\expec[\dotprod{AX}{E}] = \expec[\eta^\top M_{A} \eta] = 0$. Denoting the set 
\begin{equation*}
    \mathcal{M} = \set{M_A : A \in \tanconeB \cap \frobsphere}
\end{equation*}
we obtain
$$\sup_{A \in \tanconeB \cap \frobsphere} \dotprod{AX}{E} = \sup_{M_A \in \mathcal{M}} \eta^\top M_{A} \eta.$$
We now use Theorem \ref{thm:dirksen_chaos_conc} to control this suprema. To this end, we observe that
\begin{align*}
d_F(\mathcal{M}) &= \sup_{M_A \in \mathcal{M}} \norm{M_A}_F = \sup_{A \in \tanconeB \cap \frobsphere} \norm{(I_T \otimes A)\Gamma}_F \leq \sqrt{T} J(A^*), \tag{\text{using \eqref{eq:df_bound_temp1}}} \\
d_2(\mathcal{M}) &= \sup_{M_A \in \mathcal{M}} \norm{M_A}_2 = \sup_{A \in \tanconeB \cap \frobsphere} \norm{(I_T \otimes A)\Gamma}_2 \leq J(A^*) \tag{using \eqref{eq:d2_bound_temp1}}.
\end{align*}
It is easy to see that\footnote{\revo{Since $\tanconeBtil \mapsto \mathcal{M}$ is $1-1$, and $\norm{\revj{M_{A_1} - M_{A_2}}} = \norm{\revj{(I_T \otimes A_1)\Gamma - (I_T \otimes A_2)\Gamma}}$ when $\norm{\cdot}$ is either the spectral or Frobenius norm.}} $\gamma_{\alpha}(\mathcal{M}, \norm{\cdot}) = \gamma_{\alpha}(\tanconeBtil, \norm{\cdot})$ \revo{for} $\norm{\cdot}$ \revo{corresponding to either the spectral or Frobenius norm}. Then by using Theorem \ref{thm:dirksen_chaos_conc} with $\xi$ therein corresponding to $\frac{\eta}{L}$, we readily arrive at the statement of the lemma.
\end{proof}
The event in Lemma \ref{lem:gen_chaos_conc_bds} implies the bound
\begin{align}
    \dotprod{(\est{A} -  B) X}{E} 
    &\leq  c_3 \norm{\est{A} -  B}_F \left(u J(A^*) + \sqrt{u T} J(A^*) + \gamma_1(\tanconeBtil, \norm{\cdot}_2) + \gamma_2(\tanconeBtil, \norm{\cdot}_F) \right) \nonumber \\
    &= c_3  \varrho(\tanconeBtil, u)  \norm{\est{A} -  B}_F\label{eq:temp_bd_14}
\end{align}
with $\varrho(\cdot, \cdot)$ as defined in Theorem \ref{thm:dirksen_chaos_conc}. 
\paragraph{Step 4: Putting it together.} Using the results from \eqref{eq:temp_bd_11}, \eqref{eq:temp_bd_12}, \eqref{eq:temp_bd_13}, \eqref{eq:temp_bd_14} in Lemma \ref{lem:ineq_first_ord_cond}, we have with probability at least $1 - 2\exp(-c_2 \min\set{\frac{t^2}{\Vbar^2}, \frac{t}{\Ubar}}) - \exp(-u)$  that 
\begin{align}
    \norm{\est{A} - B}_F^2 (T - c_1 \Pbar - t) 
    &\leq c_3 \varrho(\tanconeBtil, u) \norm{\est{A} - B}_F  + \norm{A^* - B}_F \norm{\est{A} - B}_F \left(T J^2(A^*) + c_1 \Pbar + t \right) \nonumber \\
    \iff \norm{\est{A} - B}_F &\leq c_3 \frac{\varrho(\tanconeBtil, u)}{T - c_1 \Pbar - t} + \norm{A^* - B}_F \left(\frac{T J^2(A^*) + c_1 \Pbar + t}{T - c_1 \Pbar - t} \right). \label{eq:est_bd_put_together_1}
\end{align}
Our aim is to simplify the above bounds and also showcase the dependency on the local tangent cone $\tanconeB$. To this end, the following claim is useful.
\begin{claim} \label{claim:gamma_func_bds}
    For any $\alpha > 0$, \revo{there is a constant $c_{\alpha} > 0$ such that}
    %
    %
    \begin{align*}
        \gamma_{\alpha}(\tanconeBtil, \norm{\cdot}_2) 
        &\leq \revo{c_\alpha} J(A^*) \gamma_{\alpha}(\tanconeB \cap \frobsphere, \norm{\cdot}_2), \\
         \gamma_{\alpha}(\tanconeBtil, \norm{\cdot}_F) &\leq \revo{c_\alpha} \sqrt{T} J(A^*) \gamma_{\alpha}(\tanconeB \cap \frobsphere, \norm{\cdot}_F).
    \end{align*}
\end{claim}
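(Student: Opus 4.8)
The plan is to realize $\tanconeBtil$ as the Lipschitz image of $\tanconeB \cap \frobsphere$ and then exploit the fact that the $\gamma_\alpha$ functionals are monotone under Lipschitz surjections. Concretely, define the map $\phi : \tanconeB \cap \frobsphere \to \tanconeBtil$ by $\phi(A) := (I_T \otimes A)\Gamma$; by the definition of $\tanconeBtil$ this map is \emph{onto} $\tanconeBtil$, so it will suffice to determine its Lipschitz constant in each of the two norms and then invoke the general fact recorded below.

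For the Lipschitz estimates, note first that $\phi(A) - \phi(A') = (I_T \otimes (A-A'))\Gamma$ for any $A, A' \in \tanconeB \cap \frobsphere$. Using submultiplicativity of the spectral norm, the Kronecker identity $\norm{I_T \otimes M}_2 = \norm{M}_2$, and the bound $\norm{\Gamma}_2 \leq J(A^*)$ established earlier, we get
\[
\norm{\phi(A) - \phi(A')}_2 \leq \norm{I_T \otimes (A - A')}_2 \, \norm{\Gamma}_2 \leq J(A^*)\, \norm{A - A'}_2 .
\]
For the Frobenius case, using $\norm{XY}_F \leq \norm{X}_F \norm{Y}_2$ together with the identity $\norm{I_T \otimes M}_F = \sqrt{T}\,\norm{M}_F$ (since $I_T \otimes M$ is block-diagonal with $T$ copies of $M$), we get
\[
\norm{\phi(A) - \phi(A')}_F \leq \norm{I_T \otimes (A - A')}_F \, \norm{\Gamma}_2 \leq \sqrt{T}\, J(A^*)\, \norm{A - A'}_F .
\]
Thus $\phi$ is $J(A^*)$-Lipschitz from $(\tanconeB \cap \frobsphere, \norm{\cdot}_2)$ to $(\tanconeBtil, \norm{\cdot}_2)$, and $\sqrt{T}\, J(A^*)$-Lipschitz from $(\tanconeB \cap \frobsphere, \norm{\cdot}_F)$ to $(\tanconeBtil, \norm{\cdot}_F)$.

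It remains to record and apply the following elementary consequence of Definition \ref{def:gamma_fun}: if $f : (\calS, d) \to (\calS', d')$ is surjective and satisfies $d'(f(s), f(t)) \leq a\, d(s,t)$ for all $s,t \in \calS$, then $\gamma_\alpha(\calS', d') \leq a\, \gamma_\alpha(\calS, d)$ for every $\alpha \geq 1$. To see this, fix $\epsilon > 0$, pick an admissible sequence $(\calS_r)_{r \geq 0}$ of $\calS$ with $\sup_{s \in \calS} \sum_r 2^{r/\alpha} d(s, \calS_r) \leq \gamma_\alpha(\calS,d) + \epsilon$, and push it forward by setting $\calS'_r := f(\calS_r)$. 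Then $\abs{\calS'_0} = 1$ and $\abs{\calS'_r} \leq \abs{\calS_r} \leq 2^{2^r}$, so $(\calS'_r)$ is admissible for $\calS'$; moreover, for any $s' = f(s) \in \calS'$, choosing a near-minimizer $t_r \in \calS_r$ of $d(s, \cdot)$ gives $d'(s', \calS'_r) \leq d'(f(s), f(t_r)) \leq a\, d(s, \calS_r)$, hence $\sum_r 2^{r/\alpha} d'(s', \calS'_r) \leq a \sum_r 2^{r/\alpha} d(s, \calS_r)$. Taking the supremum over $s' \in \calS'$ (using surjectivity of $f$), then the infimum over admissible sequences of $\calS'$, and letting $\epsilon \to 0$ proves the fact. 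Applying it to $\phi$ with $a = J(A^*)$ in the spectral norm and with $a = \sqrt{T}\, J(A^*)$ in the Frobenius norm yields exactly the two inequalities of the claim.

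I do not expect a genuine obstacle here: the argument is essentially change-of-metric bookkeeping. The only points needing a little care are verifying that the pushed-forward sets $\calS'_r$ remain admissible (which uses only $\abs{f(\calS_r)} \leq \abs{\calS_r}$) and, in the Frobenius estimate, selecting the submultiplicative inequality in the right order so that the dimension factor is exactly $\sqrt{T}$ rather than a coarser, $n$-dependent constant.
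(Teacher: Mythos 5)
Your proof is correct and follows essentially the same route as the paper's: both realize $\tanconeBtil$ as the image of $\tanconeB \cap \frobsphere$ under $A \mapsto (I_T \otimes A)\Gamma$, establish the Lipschitz constants $J(A^*)$ and $\sqrt{T}J(A^*)$ in the spectral and Frobenius norms respectively, and conclude from the behaviour of $\gamma_\alpha$ under such maps. You merely spell out the pushforward-of-admissible-sequences argument that the paper leaves implicit in the phrase ``readily implies the stated bounds.''
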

\begin{proof}
    For any $\revj{A_1,A_2} \in \tanconeB \cap \frobsphere$, consider the matrices $\revj{W_1} = (\revj{I_T} \otimes \revj{A_1}) \Gamma$ and $\revj{W_2} = (\revj{I_T} \otimes \revj{A_2}) \Gamma$. Clearly, $\norm{\revj{W_1 - W_2}}_2 \leq J(A^*) \norm{\revj{A_1 - A_2}}_2$ and $\norm{\revj{W_1 - W_2}}_F \leq \sqrt{T} J(A^*) \norm{\revj{A_1 - A_2}}_F$. \revo{Since the mapping $\tanconeB \cap \frobsphere \mapsto \tanconeBtil$ is onto,} this then readily implies the stated bounds using the \revo{property} of $\gamma_{\alpha}$ functionals \revo{for Lipschitz and onto maps (stated in Section \ref{subsec:prelim})}.
\end{proof}
Claim \ref{claim:gamma_func_bds} leads to the following bounds on the terms $\Pbar, \Vbar$ (\revo{for some constant $C \geq 1$}).
\begin{align*}
    \Pbar &\leq \Pbar_1 := \revo{C} J^2(A^*) \left(\gamma_{2}^2(\tanconeB \cap \frobsphere, \norm{\cdot}_2) +\sqrt{T} \gamma_{2}(\tanconeB \cap \frobsphere, \norm{\cdot}_2) + \sqrt{T} \right) \\
    \Vbar &\leq \Vbar_1 := \revo{C} J^2(A^*) \left(\gamma_{2}(\tanconeB \cap \frobsphere, \norm{\cdot}_2) +\sqrt{T} \right)
\end{align*}
Furthermore, for $1 \leq u \leq T$ we can bound $\varrho(\tanconeBtil, u)$ as (\revo{for some constant $C \geq 1$})
\begin{equation*}
    \varrho(\tanconeBtil, u) \leq \revo{C} J(A^*) \left(2\sqrt{uT} + \gamma_1(\tanconeB \cap \frobsphere, \norm{\cdot}_2) + \sqrt{T} \gamma_2(\tanconeB \cap \frobsphere, \norm{\cdot}_F) \right). 
\end{equation*}
The above considerations lead to the following simplification of \eqref{eq:est_bd_put_together_1},
\begin{align}
    \norm{\est{A} - B}_F \leq c_3 &\frac{J(A^*) \left(2\sqrt{uT} + \gamma_1(\tanconeB \cap \frobsphere, \norm{\cdot}_2) + \sqrt{T} \gamma_2(\tanconeB \cap \frobsphere, \norm{\cdot}_F) \right)}{T - c_1 \Pbar_1 - t} \nonumber \\
    &+ \norm{A^* - B}_F \frac{T J^2(A^*) + c_1 \Pbar_1 + t}{T - c_1 \Pbar_1 - t} \label{eq:est_bd_put_together_2}
\end{align}
which holds with probability at least $1 - 2\exp(-c_2 \min\{\frac{t^2}{\Vbar_1^2}, \frac{t}{\Ubar_1}\}) - \exp(-u)$.

Now choosing $t = \Vbar_1 \sqrt{u} $, note that
\begin{equation*}
    \min\set{\frac{t^2}{\Vbar_1^2}, \frac{t}{\Ubar_1}} = \min\set{u, \sqrt{u} \frac{\Vbar_1}{\Ubar}} \geq \min(u, \sqrt{u}) = \sqrt{u}
\end{equation*}
where the inequality holds since $\Vbar_1 \geq \Ubar$, and the final equality uses the condition $u \geq 1$. 
Also note that \eqref{eq:est_bd_put_together_2} holds provided $c_1 \Pbar_1 + t < T$. Using the condition $u \geq 1$, it is then easily verified that 
$$c_1 \Pbar_1 + t \leq  c_1' J^2(A^*) \left(\gamma_2^2(\tanconeB \cap \frobsphere, \norm{\cdot}_2) + \sqrt{T} \gamma_2(\tanconeB \cap \frobsphere, \norm{\cdot}_2) + \sqrt{uT} \right)$$
for some constant $c_1' > 0$ depending on $c_1$. 
So it suffices to ensure that $c_1 \Pbar_1 + t \leq T/2$ for which a sufficient condition is  
\begin{align} \label{eq:temp_4}
    J^2(A^*) \left(\frac{\gamma_2^2(\tanconeB \cap \frobsphere, \norm{\cdot}_2)}{T} + \frac{\gamma_2(\tanconeB \cap \frobsphere, \norm{\cdot}_2)}{\sqrt{T}} + \sqrt{\frac{u}{T}} \right) \leq \frac{1}{2 c_1'}.
\end{align}
Since $J(A^*) \geq 1$, hence \eqref{eq:temp_4} holds provided 
$$T \geq c_1'' J^4(A^*) \max\set{\gamma_2^2(\tanconeB \cap \frobsphere, \norm{\cdot}_2), u}$$
for a suitably large constant $c_1'' > 1$ (depending on $c_1'$). Observe that the above condition implies $u \leq T$.

The above considerations are summarized in the form of the following theorem. 
\begin{theorem}
    There exist constants $C_1, C_2, C_3, C_4 > 0$ depending only on $L$ such that the following is true. For any $u \geq 1$ and $B \in \calK$, suppose that 
    $$T \geq C_1 J^4(A^*) \max\set{\gamma_2^2(\tanconeB \cap \frobsphere, \norm{\cdot}_2), u}.$$ 
    Then with probability at least $1 - C_2\exp(-C_3 \sqrt{u})$, the estimate $\est{A}$ in \eqref{eq:Aest_convex_matform} satisfies
    \begin{equation*}
      \norm{\est{A} - B}_F \leq C_4 J(A^*) \left(\sqrt{\frac{u}{T}} + \frac{\gamma_1(\tanconeB \cap \frobsphere, \norm{\cdot}_2)}{T} + \frac{\gamma_2(\tanconeB \cap \frobsphere, \norm{\cdot}_F)}{\sqrt{T}} \right) 
    + 3 J^2(A^*) \norm{A^* - B}_F.    
    \end{equation*}
\end{theorem}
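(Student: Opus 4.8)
The plan is to assemble the pieces already prepared in Steps~1--4. The starting point is Lemma~\ref{lem:ineq_first_ord_cond}, which bounds $\norm{(\est{A}-B)X}_F^2$ by the cross term $\dotprod{(\est{A}-B)X}{E}$ plus the approximation term $\norm{(A^*-B)X}_F\norm{(\est{A}-B)X}_F$. Into this I substitute: the two-sided control \eqref{eq:temp_bd_11}--\eqref{eq:temp_bd_13} of $\norm{(\est{A}-B)X}_F$ and $\norm{(A^*-B)X}_F$ coming from Lemma~\ref{lem:psd_chaos_conc_bounds} (i.e.\ Theorem~\ref{thm:krahmer_chaos} applied to the p.s.d.\ chaos $\norm{W\xi}_2^2$ over $\tanconeBtil$, using $\norm{\Gamma}_2\le J(A^*)$), and the bound \eqref{eq:temp_bd_14} on $\dotprod{(\est{A}-B)X}{E}$ from Lemma~\ref{lem:gen_chaos_conc_bds} (Theorem~\ref{thm:dirksen_chaos_conc} applied to the indefinite chaos $\eta^\top M_A\eta$). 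Cancelling one factor of $\norm{\est{A}-B}_F$ yields inequality \eqref{eq:est_bd_put_together_1}, valid on the intersection of the two events, hence with probability at least $1-2\exp(-c_2\min\{t^2/\Vbar^2,\,t/\Ubar\})-\exp(-u)$.

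Next I rewrite the geometry-dependent quantities in terms of the \emph{local} cone. Claim~\ref{claim:gamma_func_bds} replaces every $\gamma_\alpha(\tanconeBtil,\cdot)$ by $\gamma_\alpha(\tanconeB\cap\frobsphere,\cdot)$ at the cost of a factor $J(A^*)$ (spectral norm) or $\sqrt{T}J(A^*)$ (Frobenius norm), giving the upper estimates $\Pbar\le\Pbar_1$, $\Vbar\le\Vbar_1$ and $\varrho(\tanconeBtil,u)\le J(A^*)\bigl(2\sqrt{uT}+\gamma_1(\tanconeB\cap\frobsphere,\norm{\cdot}_2)+\sqrt{T}\gamma_2(\tanconeB\cap\frobsphere,\norm{\cdot}_F)\bigr)$ recorded in the outline, and therefore \eqref{eq:est_bd_put_together_2}. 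The calibration step is to set the free parameter $t=\Vbar_1\sqrt{u}$. Since $\Vbar_1\ge\Ubar$ (both equal $J^2(A^*)$ times a factor $\ge 1$), for $u\ge 1$ we get $\min\{t^2/\Vbar_1^2,\,t/\Ubar\}\ge\min\{u,\sqrt{u}\}=\sqrt{u}$, which produces the stated probability $1-C_2\exp(-C_3\sqrt{u})$ after renaming constants.

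It remains to check that the denominators in \eqref{eq:est_bd_put_together_2} stay bounded away from zero. Expanding $c_1\Pbar_1+t$ and using $u\ge1$ gives $c_1\Pbar_1+t\le c_1'J^2(A^*)\bigl(\gamma_2^2(\tanconeB\cap\frobsphere,\norm{\cdot}_2)+\sqrt{T}\gamma_2(\tanconeB\cap\frobsphere,\norm{\cdot}_2)+\sqrt{uT}\bigr)$; dividing by $T$ and invoking the hypothesis $T\ge C_1 J^4(A^*)\max\{\gamma_2^2(\tanconeB\cap\frobsphere,\norm{\cdot}_2),\,u\}$ with $C_1$ large (and $J(A^*)\ge1$) forces $c_1\Pbar_1+t\le T/2$. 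Hence $T-c_1\Pbar_1-t\ge T/2$ and $TJ^2(A^*)+c_1\Pbar_1+t\le \tfrac{3}{2}TJ^2(A^*)$, so \eqref{eq:est_bd_put_together_2} collapses to
$$\norm{\est{A}-B}_F\le C_4\,\frac{J(A^*)\bigl(2\sqrt{uT}+\gamma_1(\tanconeB\cap\frobsphere,\norm{\cdot}_2)+\sqrt{T}\gamma_2(\tanconeB\cap\frobsphere,\norm{\cdot}_F)\bigr)}{T}+3J^2(A^*)\norm{A^*-B}_F,$$
and dividing the numerator through by $T$ (so $\sqrt{uT}/T=\sqrt{u/T}$, $\sqrt{T}\gamma_2/T=\gamma_2/\sqrt{T}$) and absorbing $2C_4$ into $C_4$ gives exactly the asserted bound.

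The genuine difficulty of the whole argument sits upstream, in Lemmas~\ref{lem:psd_chaos_conc_bounds} and \ref{lem:gen_chaos_conc_bds}: recognizing the trajectory quadratic forms as subgaussian chaos processes indexed by $\tanconeBtil$, and matching the complexity parameters $P,V,U$ and $\varrho$ to $J(A^*)$, $\sqrt{T}$ and the $\gamma_\alpha$ functionals via $\norm{\Gamma}_2\le J(A^*)$. For the present statement the only care needed is the bookkeeping above — the choice $t=\Vbar_1\sqrt{u}$, the inequality $\Vbar_1\ge\Ubar$, and verifying that the sample-size hypothesis is precisely what keeps the denominators at least $T/2$; everything else is collecting constants that depend only on $L$.
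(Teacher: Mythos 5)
Your proposal is correct and follows essentially the same route as the paper's own Step 4: substituting \eqref{eq:temp_bd_11}--\eqref{eq:temp_bd_14} into Lemma \ref{lem:ineq_first_ord_cond}, localizing via Claim \ref{claim:gamma_func_bds}, calibrating $t=\Vbar_1\sqrt{u}$ with $\Vbar_1\ge\Ubar$, and using the sample-size hypothesis to keep $c_1\Pbar_1+t\le T/2$. The bookkeeping, including the constant $3J^2(A^*)$ on the approximation term, matches the paper's derivation.
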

The statement of Theorem \ref{thm:main_err_tangent_cone} is obtained by choosing $u = \frac{1}{C_3^2} \log^2(\frac{C_2}{\delta})$ for any $\delta \in (0,1)$, and by the triangle inequality
\begin{align*}
   \norm{\est{A} - A^*}_F &\leq  \norm{\est{A} - B}_F +  \norm{A^* - B}_F \\
   &\leq C_4 J(A^*) \left(\sqrt{\frac{u}{T}} + \frac{\gamma_1(\tanconeB \cap \frobsphere, \norm{\cdot}_2)}{T} + \frac{\gamma_2(\tanconeB \cap \frobsphere, \norm{\cdot}_F)}{\sqrt{T}} \right) 
    + 4 J^2(A^*) \norm{A^* - B}_F.
\end{align*}
This completes the proof.

%
\subsection{Proof of Theorem \ref{thm:no_tancone_Struc}} \label{subsec:proof_main_no_tancone}
We will follow similar notation as in the proof of Theorem \ref{thm:main_err_tangent_cone}. Our starting point is inequality in Lemma \ref{lem:ineq_first_ord_cond} which we recall below.
\begin{equation*}
        \norm{(\est{A} - B)X}_F^2 \leq \dotprod{(\est{A} -  B) X}{E} + \norm{(A^* - B) X}_F \norm{(\est{A} - B) X}_F.
\end{equation*}
Suppose $\norm{\est{A}-B}_F \geq x$ since otherwise the statement of the theorem holds trivially. Denote $a=\frac{x}{\norm{\est{A}-B}_F} \leq 1$. Since $\calK$ is convex, hence $U= a \est{A} + (1-a) B \in \calK$ which implies $a(\est{A} - B) = U - B$.

Denoting $\calKtil = \calK - B$ from now on, note that $\calKtil$ is a star-shaped set\footnote{A set $\calX$ is star shaped if $\lambda \calX \subset \calX$ for any $\lambda \in [0,1]$. Any convex set containing the origin is star-shaped.} since $\calK$ is convex and $\calKtil$ contains the origin. We would like to obtain an analogue of \cite[Lemma 4.4]{planlasso16} to our setting where we lower bound
\begin{align*}
    \inf_{M \in \calKtil, \norm{M}_F \geq x} \norm{MX}_F = \inf_{M \in \calKtil, \norm{M}_F \geq x} \norm{(I_T \otimes M) \Gamma \xi}_2
\end{align*}
Denoting $\Mtil = x\frac{M}{\norm{M}_F}$ for any $M \in \calKtil$ such that $\norm{M}_F \geq x$, clearly $\Mtil \in \calKtil$ since $\calKtil$ is star-shaped. This implies 
\begin{equation} \label{eq:no_tancone_tmp1}
    \inf_{M \in \calKtil, \norm{M}_F \geq x} \frac{\norm{(I_T \otimes M) \Gamma \xi}_2}{\norm{M}_F} = \inf_{\Mtil \in \calKtil \cap x \frobsphere} \frac{\norm{(I_T \otimes \Mtil) \Gamma \xi}_2}{x}.
\end{equation}
Now we can bound $\expec[\norm{(I_T \otimes \Mtil) \Gamma \xi}_2^2]$ uniformly over $\Mtil \in \calKtil \cap x \frobsphere$ as
\begin{equation*}
\expec[\norm{(I_T \otimes \Mtil) \Gamma \xi}_2^2] = \norm{(I_T \otimes \Mtil) \Gamma}_F^2 \ 
    \begin{cases}
       \leq x^2 T J^2(A^*) \\
       \geq T \norm{\Mtil}_F^2 = T x^2.
    \end{cases}
\end{equation*}
Furthermore, denoting $\calS_x := \set{(I_T \otimes \Mtil) \Gamma: \Mtil \in \calKtil \cap x \frobsphere}$, it is easy to verify that 
\begin{align*}
d_F(\calS_x) &\leq J(A^*) x \sqrt{T}, \quad d_2(\calS_x) \leq J(A^*) x.
\end{align*}
This implies the following bounds on the terms $P,U$ and $V$.
\begin{align*}
P &\leq \gamma_2(\calS_x, \norm{\cdot}_2)\left(\gamma_2(\calS_x, \norm{\cdot}_2 ) + x J(A^*) \sqrt{T} \right) + x^2 J^2(A^*) \sqrt{T} =: \Pbar, \\
V &\leq x J(A^*) \left(\gamma_2(\calS_x, \norm{\cdot}_2) + x J(A^*)\sqrt{T} \right) =: \Vbar, \\
U &= d_2^2(\calS_x) \leq x^2 J^2(A^*) =: \Ubar.
\end{align*}
Note that \eqref{eq:no_tancone_tmp1} also holds with $\inf$ replaced by $\sup$ on both sides. Hence using Theorem \ref{thm:krahmer_chaos}, it follows that with probability at least $1 - 2\exp(-c_2 \min\set{\frac{t^2}{\Vbar^2}, \frac{t}{\Ubar}})$, 
\begin{enumerate}
    \item $\inf_{M \in \calKtil, \norm{M}_F \geq x} \frac{\norm{(I_T \otimes M) \Gamma \xi}_2}{\norm{M}_F} \geq \frac{(Tx^2 - c_1 \Pbar - t)^{1/2}}{x}$, and

    \item $\sup_{M \in \calKtil, \norm{M}_F \geq x} \frac{\norm{(I_T \otimes M) \Gamma \xi}_2}{\norm{M}_F} \leq \frac{(Tx^2 J^2(A^*) + c_1 \Pbar + t)^{1/2}}{x}$,
\end{enumerate}
which in turn implies the bounds 
\begin{align}
    \norm{(\est{A} - B) X}_F^2 &\geq \frac{\norm{\est{A} - B}_F^2}{x^2} (Tx^2 - c_1 \Pbar - t), \label{eq:no_tancone_tmp21} \\
    \norm{(\est{A} - B) X}_F &\leq \frac{\norm{\est{A} - B}_F}{x} (T x^2 J^2(A^*) + c_1 \Pbar + t)^{1/2},  \label{eq:no_tancone_tmp22} \\
     \text { and } \ \norm{(A^* - B) X}_F &\leq \frac{\norm{A^* - B}_F}{x} (T x^2 J^2(A^*) + c_1 \Pbar + t)^{1/2}. \label{eq:no_tancone_tmp23}
\end{align}
We now have to bound $\dotprod{(\est{A}-B)X}{E}$. Denoting $\delta = \norm{\est{A} - B}_F$, the fact that $\calKtil$ is star-shaped implies $\delta^{-1} \calKtil \subset x^{-1} \calKtil$ (since $\delta^{-1} \leq x^{-1}$). This then implies 
\begin{align*}
    \dotprod{\frac{(\est{A}-B)X}{\norm{\est{A}-B}_F}}{E} \leq \sup_{U \in \delta^{-1}\calKtil \cap \frobball} \dotprod{UX}{E} \leq \sup_{U \in x^{-1} \calKtil \cap\frobball} \dotprod{UX}{E} = \frac{1}{x} \sup_{U \in \calKtil \cap x\frobball} \dotprod{UX}{E}
\end{align*}
which means that 
\begin{equation*}
\dotprod{(\est{A}-B)X}{E} \leq \frac{\norm{\est{A}-B}_F}{x} \sup_{U \in \calKtil \cap x\frobball} \dotprod{UX}{E}.
\end{equation*}
Now as in the proof of Theorem \ref{thm:main_err_tangent_cone}, we can write 
\begin{align*}
    \dotprod{UX}{E} = \eta^\top \underbrace{\begin{bmatrix}
   0 &  0 \\ 
   (I_T \otimes U)\Gamma &  0
 \end{bmatrix}}_{M_U} \eta
 = \eta^{\top} M_U \eta
\end{align*}
with $\expec[\dotprod{UX}{E}] = 0$. Denoting $\calU_x := \set{M_U : U \in \calKtil \cap x\frobball}$, we then obtain 
\begin{equation*}
    \sup_{U \in \calKtil \cap x\frobball} \dotprod{UX}{E} = \sup_{M_U \in \calU_x} \eta^{\top} M_U \eta. 
\end{equation*}
It can be checked that $d_F(\calU_x) \leq x J(A^*) \sqrt{T}$ and $d_2(\calU_x) \leq J(A^*) x$. Moreover, denoting $\calStil_x:= \set{(I_T \otimes U)\Gamma: U \in \calKtil \cap x\frobball}$, we have that $\gamma_{\alpha}(\calU_x,\norm{\cdot}) = \gamma_{\alpha}(\calStil_x,\norm{\cdot})$ for $\alpha > 0$ and \revo{norm $\norm{\cdot}$ corresponding to either the spectral or Frobenius norm (via arguments similar to that seen earlier)}. 

Then by invoking Theorem \ref{thm:dirksen_chaos_conc}, it follows that for $u \geq 1$, it holds with probability at least $1-\exp(-u)$ that 
\begin{align}
\sup_{U \in \calKtil \cap x\frobball} \abs{\dotprod{UX}{E}} &\leq c_3 \left( \gamma_1(\calStil_x,\norm{\cdot}_2) + \gamma_2(\calStil_x,\norm{\cdot}_F) + \sqrt{uT} x J(A^*) + ux J(A^*) \right) \nonumber \\
&=: \revo{c_3} \rhobar(\calStil_X,x,u), \nonumber\\
\implies \dotprod{(\est{A}-B)X}{E} &\leq c_3\frac{\norm{\est{A}-B}_F}{x} \rhobar(\calStil_x,x,u). \label{eq:no_tancone_tmp3}
\end{align}
Hence using \eqref{eq:no_tancone_tmp21}, \eqref{eq:no_tancone_tmp22},   \eqref{eq:no_tancone_tmp23} and \eqref{eq:no_tancone_tmp3}, it holds with probability at least $1 - 2\exp(-c_2 \min\set{\frac{t^2}{\Vbar^2}, \frac{t}{\Ubar}}) - \exp(-u)$ (for $u \geq 1$)  that 
\begin{align}
\norm{\est{A} - B}_F &\leq c_3 \frac{x \rhobar(\calStil_x, x, u)}{Tx^2 - c_1 \Pbar - t} + \norm{A^* - B}_F \left(\frac{T x^2 J^2(A^*) + c_1 \Pbar + t}{Tx^2 - c_1 \Pbar - t} \right). \label{eq:no_tancone_tmp4}
\end{align}
It remains to simplify the expressions of the terms involved in our bounds. To this end, we obtain analogous to Claim \ref{claim:gamma_func_bds} that for any $\alpha > 0$, \revo{there is a constant $c_\alpha > 0$ such that}
\begin{align*}
\gamma_{\alpha}(\calS_x,\norm{\cdot}_2) &\leq \revo{c_\alpha} \gamma_{\alpha}(\calKtil \cap x\frobsphere,\norm{\cdot}_2) J(A^*), \\
\gamma_{\alpha}(\calS_x,\norm{\cdot}_F) &\leq \revo{c_\alpha \sqrt{T} J(A^*) \gamma_{\alpha}(\calKtil \cap x\frobsphere,\norm{\cdot}_F)}, \\
\gamma_{\alpha}(\calStil_x,\norm{\cdot}_2) &\leq \revo{c_\alpha} \gamma_{\alpha}(\calKtil \cap x\frobball,\norm{\cdot}_2) J(A^*), \\
\gamma_{\alpha}(\calStil_x,\norm{\cdot}_F) &\leq \revo{c_\alpha \sqrt{T} J(A^*) \gamma_{\alpha}(\calKtil \cap x\frobball,\norm{\cdot}_F)}.
\end{align*}
Furthermore, we can bound $\Pbar, \Vbar$ as (\revo{for some constant $c > 0$})
\begin{align*}
\Pbar &\leq \revo{c} \gamma_2(\calKtil \cap x\frobsphere,\norm{\cdot}_2) J(A^*) \left(\gamma_2(\calKtil\cap x\frobsphere,\norm{\cdot}_2) J(A^*) + x J(A^*) \sqrt{T} \right) + x^2 J^2(A^*) \sqrt{T} = : \Pbar_1, \\
\Vbar &\leq \revo{c} x J^2(A^*) \left(\gamma_2(\calKtil\cap x\frobsphere,\norm{\cdot}_2) + x\sqrt{T} \right) =: \Vbar_1,
\end{align*}
and since $1 \leq u \leq T$, we also obtain 
\begin{align*}
\rhobar(\calStil_x, x, u) 
&\leq \revo{c} \gamma_1(\calKtil \cap x\frobball, \norm{\cdot}_2) J(A^*) + \gamma_2(\calKtil \cap x\frobball,\norm{\cdot}_F)\sqrt{T} J(A^*) + 2\sqrt{uT} xJ(A^*) \\
&= \revo{c} J(A^*) \left(\gamma_1(\calKtil \cap x\frobball, \norm{\cdot}_2) + \sqrt{T} \gamma_2(\calKtil \cap x\frobball,\norm{\cdot}_F) + 2x\sqrt{uT}\right).
\end{align*}
Hence when $1 \leq u \leq T$, we can simplify \eqref{eq:no_tancone_tmp4} as 
\begin{align*}
    \norm{\est{A} - B}_F &\leq c_3 \frac{x J(A^*) \left(2x\sqrt{uT} + \gamma_1(\calKtil \cap x\frobball, \norm{\cdot}_2) + \sqrt{T} \gamma_2(\calKtil \cap x\frobball, \norm{\cdot}_F) \right)}{Tx^2 - c_1 \revo{\Pbar_1} - t} 
    \\
    &+ \norm{A^* - B}_F \left(\frac{T x^2 J^2(A^*) + c_1 \revo{\Pbar_1} + t}{Tx^2 - c_1 \revo{\Pbar_1} - t} \right), 
\end{align*}
which is analogous to \eqref{eq:est_bd_put_together_2}. The remaining steps of the proof follow the same calculations as in the proof of Theorem \ref{thm:main_err_tangent_cone} (details omitted) which leads us to the statement of the theorem. We only remark that the condition on $T$ follows from the fact $\gamma_2(\calKtil \cap x\frobsphere, \norm{\cdot}_2) \lesssim \gamma_2(\calKtil \cap x\frobball, \norm{\cdot}_2)$ since $\frobsphere \subset \frobball$.

\section{Proof of corollaries for examples} \label{sec:proof_corr}
%
\subsection{Proof of Corollary \ref{cor:subspace}}
Since $\tanconeB = \calK$ for any $B \in \calK$, we choose  $B = A^*$. We will bound the terms $\gamma_2(\calK \cap \frobsphere, \norm{\cdot}_F)$, $\gamma_1(\calK \cap \frobsphere, \norm{\cdot}_2)$ using \eqref{eq:gamma_bound}. We first employ the simplified bound 
\begin{equation*}
    \gamma_1(\calK \cap \frobsphere, \norm{\cdot}_2) \leq \gamma_1(\calK \cap \frobsphere, \norm{\cdot}_F)
\end{equation*}
since $\norm{\cdot}_2 \leq \norm{\cdot}_F$. Now the set $\calK \cap \frobball \subset \matR^{n \times n}$ is a $d$-dimensional unit ball w.r.t the $\norm{\cdot}_F$ norm, hence it follows\footnote{We simply treat a matrix $X \in \matR^{n \times n}$ by its vector form $\vect(X) \in \matR^{n^2}$.} from standard volumetric estimates (see for e.g. \cite[Corollary 4.2.13]{HDPbook}) that 
\begin{equation*}
    \calN(\calK \cap \frobball, \norm{\cdot}_F, \epsilon) \leq \left(\frac{3}{\epsilon} \right)^d \quad \text{ for } \epsilon  \leq 1
\end{equation*}
and $\calN(\calK \cap \frobball, \norm{\cdot}_F, \epsilon) = 1$ for $\epsilon > 1$. Since $\diam(\calK \cap \frobball) = 2$, it follows from \eqref{eq:gamma_bound} that for $\alpha = 1,2$, 
\begin{align*}
  \gamma_{\alpha}(\calK \cap \frobsphere, \norm{\cdot}_F) \lesssim \gamma_{\alpha}(\calK \cap \frobball, \norm{\cdot}_F) \lesssim \int_{0}^{1} d^{1/\alpha} \log^{1/\alpha}\left(\frac{3}{\epsilon} \right) d\epsilon \lesssim d^{1/\alpha},
\end{align*}
where the first inequality holds since $\calK \cap \frobsphere \subset \calK \cap \frobball$. This completes the proof.

\subsection{Proof of Corollary \ref{cor:sparse_example}} \label{subsec:proof_corr_sparse}
Since $A^*$ is $k$-sparse, we can bound $w(\tanconeAstar \cap \frobsphere)$ as 
\begin{equation*}
    w(\tanconeAstar \cap \frobsphere) \lesssim \sqrt{k \log(n^2/k) + k}  =: \beta(n,k);
\end{equation*}
see for instance \cite[Section 4.3]{Tropp2015} which can be directly applied to our setting. Then using \eqref{eq:talag_maj_meas_thm} and the fact $\norm{\cdot}_2 \leq \norm{\cdot}_F$, we obtain 
\begin{equation*}
    \gamma_2(\tanconeAstar \cap \frobsphere, \norm{\cdot}_2) \leq \gamma_2(\tanconeAstar \cap \frobsphere, \norm{\cdot}_F) \lesssim w(\tanconeAstar \cap \frobsphere) \lesssim \beta(n,k).
\end{equation*}
Now starting with the inequality $\gamma_1(\tanconeAstar \cap \frobsphere, \norm{\cdot}_2) \leq \gamma_1(\tanconeAstar \cap \frobsphere, \norm{\cdot}_F)$, we seek to bound the term $\gamma_1(\tanconeAstar  \cap \frobsphere, \norm{\cdot}_F)$ using \eqref{eq:gamma_bound}. To this end, we can use Sudakov's minoration inequality\footnote{see for instance \cite[Theorem 7.4.1]{HDPbook}; the canonical metric therein is $\norm{\cdot}_F$.} which yields for any $\epsilon > 0$,
\begin{equation} \label{eq:sparse_proof_bd1}
  \log \calN(\tanconeAstar \cap \frobsphere, \norm{\cdot}_F, \epsilon) \lesssim \frac{w^2(\tanconeAstar \cap \frobsphere)}{\epsilon^2} \lesssim \frac{\beta^2(n,k)}{\epsilon^2}.
\end{equation}
We can also obtain an alternate bound on $\calN(\tanconeAstar \cap \frobsphere, \norm{\cdot}_F, \epsilon)$ by using the following useful fact \cite[Exercise 4.2.10]{HDPbook}: for a metric space $(\calS,d)$ and $\calS' \subset \calS$, it holds for $\epsilon > 0$ that $\calN(\calS', d,\epsilon) \leq \calN(\calS, d,\epsilon/2)$. Applied to our setting, this yields the bound
\begin{align}  \label{eq:sparse_proof_bd2}
  \calN(\tanconeAstar \cap \frobsphere, \norm{\cdot}_F, \epsilon) \leq  \calN(\frobsphere, \norm{\cdot}_F, \epsilon/2) \leq \calN(\frobball, \norm{\cdot}_F, \epsilon/4) \leq \left(\frac{12}{\epsilon} \right)^{n^2},
\end{align}
where the final bound follows directly from \cite[Corollary 4.2.13]{HDPbook}. Hence from \eqref{eq:sparse_proof_bd1}, \eqref{eq:sparse_proof_bd2}, we have for all $\epsilon > 0$ that
\begin{equation*}
    \log \calN(\tanconeAstar \cap \frobsphere, \norm{\cdot}_F, \epsilon) \lesssim \min \set{\frac{\beta^2(n,k)}{\epsilon^2}, {n^2}\log \left(\frac{12}{\epsilon} \right)}.
\end{equation*}
Since $\diam(\tanconeAstar \cap \frobsphere) = 2$, we can employ \eqref{eq:gamma_bound} to obtain the bound 
\begin{align} \label{eq:sparse_proof_bd3}
    \gamma_1(\tanconeAstar \cap \frobsphere, \norm{\cdot}_F) \leq \int_{0}^{2} \log \calN(\tanconeAstar \cap \frobsphere, \norm{\cdot}_F, \epsilon) d\epsilon \lesssim 
    \underbrace{\int_{0}^{\epsilon_1} n^2 \log(12/\epsilon) d\epsilon}_{u_1} + \underbrace{\int_{\epsilon_1}^{2} \frac{\beta^2(n,k)}{\epsilon^2} d\epsilon}_{u_2},
\end{align}
for any $\epsilon_1 \in (0,2)$. It is easy to verify that 
\begin{align*}
u_1 &= n^2 \epsilon_1 \left(\log\left(\frac{12}{\epsilon_1}\right) + 1 \right) \leq 2 n^2 \epsilon_1 \log\left(\frac{12}{\epsilon_1}\right), \\
\text{ and } u_2 &= \left(\frac{2-\epsilon_1}{2\epsilon_1} \right) \beta^2(n,k).
\end{align*}
Using this in \eqref{eq:sparse_proof_bd3}, and choosing $\epsilon_1 = \beta(n,k)/n$, we obtain the bound 
\begin{align*}
    \gamma_1(\tanconeAstar \cap \frobsphere, \norm{\cdot}_F) \lesssim n \beta(n,k) \left[\log\left(\frac{n}{\beta(n,k)} \right) + 1 \right].
\end{align*}
The statement of the corollary now follows directly. Note that for the above choice of $\epsilon_1$ to be in the interval $(0,2)$, we need $n \geq \beta(n,k)/2$ to hold but this is always satisfied.

\subsection{Proof of Corollary \ref{cor:convex_reg_biv}} \label{subsec:proof_cor_convexreg_biv}
The proof relies on using metric entropy bounds developed in \cite[Section 4]{Kur2020ConvexRI} for the collection of convex functions lying within a ball around $f_0$. In what follows, we will borrow some notations and definitions from \cite{Kur2020ConvexRI}.

Let $\calS$ denote an equispaced $n \times n$ grid of $\Omega = [0,1/\sqrt{2}]^2$ with spacing $\delta = \frac{1}{(n-1)\sqrt{2}}$ along each axis. Note that $\Omega$ lies within the unit $\ell_2$ ball of $\matR^2$ (centered at origin). This is a requirement for the results in \cite{Kur2020ConvexRI} to hold, but can be ensured by a suitable scaling. For any function defined on $\Omega$, let us define the quasi-norm
\begin{equation*}
\ell_{\calS}(f,\Omega,p) = \left(\frac{1}{\abs{\Omega \cap \calS}} \sum_{b \in \Omega \cap \calS} \abs{f(b)}^p \right)^{1/p}.
\end{equation*}
Then for any convex $f_0$ defined on $\Omega$, and $x > 0$, define a ball  centered at $f_0$,
\begin{equation*}
    \calB_{\calS}^p(f_0;x;\Omega) = \set{f: \Omega \rightarrow \matR \text{ s.t $f$ is convex and } \ell_{\calS}(f-f_0,\Omega,p) \leq x}.
\end{equation*}
Now denoting $\calKtil = \calK - A^*$, note that
\begin{equation*}
    \calKtil \cap x \frobball = \set{A - A^*: \ A \in \calK, \ \norm{A - A^*}_F \leq x}.
\end{equation*}
We need to bound the metric entropy $\log \calN(\calKtil \cap x \frobball, \norm{\cdot}_F, \epsilon)$. To this end, it is not difficult to verify that 
\begin{equation} \label{eq:covreg_tmp1}
    \calN(\calKtil \cap x \frobball, \norm{\cdot}_F, \epsilon) = \calN\left(\calB_{\calS}^2\left(f_0;\frac{x}{n};\Omega \right), \ell_{\calS}(\cdot,\Omega,2), \frac{\epsilon}{n} \right).
\end{equation}
The RHS of \eqref{eq:covreg_tmp1} is bounded by \cite[Corollary 4.2]{Kur2020ConvexRI}, which in our setup implies
\begin{equation*}
    \log \calN(\calKtil \cap x \frobball, \norm{\cdot}_F, \epsilon) = \log \calN\left(\calB_{\calS}^2\left(f_0;\frac{x}{n};\Omega \right), \ell_{\calS}(\cdot,\Omega,2), \frac{\epsilon}{n} \right) \leq c^s K \left(\frac{x}{\epsilon}\right) \log^s n
\end{equation*}
for some constant $c > 0$. Moreover, using standard volumetric estimates as earlier (e.g., \cite[Proposition 4.2.12, Corollary 4.2.13]{HDPbook}), we also have
$\log \calN(\calKtil \cap x \frobball, \norm{\cdot}_F, \epsilon) \leq n^2 \log(6x/\epsilon)$, thus implying 
\begin{equation*}
    \log \calN(\calKtil \cap x \frobball, \norm{\cdot}_F, \epsilon) \leq \min \set{c^s K \left(\frac{x}{\epsilon}\right) \log^s n, n^2 \log\left(\frac{6x}{\epsilon} \right)}.
\end{equation*}
Now it remains to bound the gamma functionals by deploying \eqref{eq:gamma_bound}.
\begin{enumerate}
\item ({\bf Bounding $\gamma_1(\calKtil \cap x\frobball, \norm{\cdot}_2)$}). Since $\diam(\calKtil \cap x\frobball) = 2x$, we have using \eqref{eq:gamma_bound}, for any $0 < \epsilon_1 < 2x$, that
\begin{align*}
    \gamma_1(\calKtil \cap x\frobball, \norm{\cdot}_2) \leq \gamma_1(\calKtil \cap x\frobball, \norm{\cdot}_F) 
    &\lesssim \int_0^{2x} \log \calN(\calKtil \cap x \frobball, \norm{\cdot}_F, \epsilon) d\epsilon \\
    &\lesssim n^2 \int_0^{\epsilon_1} \log\left(\frac{6x}{\epsilon}\right) d\epsilon + c^s K x \log^s n \int_{\epsilon_1}^{2x} \frac{1}{\epsilon} d\epsilon \\
    &\lesssim n^2 \epsilon_1 \log\left(\frac{6x}{\epsilon_1}\right) + c^s Kx \log^s n \log\left(\frac{2x}{\epsilon_1}\right).
\end{align*}
In the final inequality above, we used similar calculations as within the proof of Corollary \ref{cor:sparse_example}. Choosing $\epsilon_1 = \frac{c^s K x \log^s n}{n^2}$ then implies
\begin{align*}
 \gamma_1(\calKtil \cap x\frobball, \norm{\cdot}_F)  \lesssim  x c^s K \log^s n \log\left(\frac{n^2}{c^s K \log^s n} \right) 
\end{align*}
where we note that $\epsilon_1 < 2x$ is ensured provided $n^2 \geq K c^s \log^s n$.

\item ({\bf Bounding $\gamma_2(\calKtil \cap x\frobball, \norm{\cdot}_2)$}). Again using \eqref{eq:gamma_bound}, we obtain
\begin{align*}
    \gamma_2(\calKtil \cap x\frobball, \norm{\cdot}_2) 
    &\leq \gamma_2(\calKtil \cap x\frobball, \norm{\cdot}_F) \\
    &\lesssim \int_0^{2x} \log^{1/2} \calN(\calKtil \cap x \frobball, \norm{\cdot}_F, \epsilon) d\epsilon \\
    &\lesssim \int_0^{2x} c^{s/2} \frac{(Kx)^{1/2}}{\sqrt{\epsilon}} \log^{s/2} n \ d\epsilon \\
    &\lesssim x c^{s/2} K^{1/2} \log^{s/2} n
\end{align*}
\end{enumerate}
Plugging these bounds in Theorem \ref{thm:no_tancone_Struc}, we see that the dependence on $x$ cancels out from all terms barring the additive term at the end. Since Theorem $2$ is true for any $x > 0$, we can now take the limit $x = 0$ to obtain the corollary.

\subsection{Proof of Corollary \ref{cor:lipschitz_reg_examp}} \label{subsec:proof_cor_lipreg}

We start by constructing $B \in \calK$ that is close to $A^*$, and each row of which is ``piecewise affine''. We will denote $X_{i,:}$ to be the $i$th row of a matrix $X$. 
\begin{lemma} \label{lem:approx_affine}
    For any $A \in \calK$, there exists $B \in \calK$ such that $B_{i,:}$ has at most $l_i + 1$ affine pieces, slopes equal to $\pm L_i$, and $\abs{B_{i,j} - B_{i,j+1}} = L_i/T$ for all $i,j$ so that 
    \begin{equation*}
        \norm{A - B}_F^2 \leq \frac{4n^3}{T^2} \left(\sum_{i=1}^n \frac{L_i^2}{l_i^2} \right).
    \end{equation*}
\end{lemma}
\begin{proof}
    The proof follows by using the arguments in \cite[Lemma 3.1]{neykov19b} to approximate $A_{i,:}$ by $B_{i,:}$ where $B_{i,:}$ has at most $l_i + 1$ pieces and the length of each piece is at most $k_i := \lfloor n/l_i \rfloor$, such that 
    \begin{equation*}
        \abs{B_{i,j} - A_{i,j}} \leq 2k_i \frac{L_i}{T} \leq \frac{2nL_i}{l_i T}.
    \end{equation*}
    This implies the stated bound in the lemma, the details are omitted.
\end{proof}
We will next bound the Gaussian width $w(\tanconeB \cap \frobball)$ by following the steps in \cite[Section 3.1]{neykov19b}. To this end, denote the monotone sequence cone $\monot_m := \set{v \in \matR^m: v_1 \leq v_2 \leq \cdots \leq v_m}$. The following result follows directly from \cite[Lemma 3.2]{neykov19b}.
\begin{lemma} \label{lem:tancone_subset}
    Let $B \in \calK$ be as in Lemma \ref{lem:approx_affine}. For each $i \in [n]$, let $T_{i,1}, \dots, T_{i,l_i + 1}$ be a disjoint partition of $[n]$ such that $B_{i,:}$ is piecewise affine on each $T_{i,j}$ with $s_{i,j}$ denoting the sign of the slope (for $j=1,\dots,l_i+1$). Then 
    \begin{equation*}
        \tanconeB \subset \set{U \in \matR^{n \times n}: U_{i,:} \in \left(- s_{i,1} \monot_{\abs{T_{i,1}}} \right) \times \cdots \times \left(- s_{i,l_i + 1} \monot_{\abs{T_{i,l_i+1}}} \right)} =: \mathcal{C}_{B}.
    \end{equation*}
\end{lemma}
Now recall the definition of the statistical dimension of a cone \cite{lotz14} $\mathcal{C} \subset \matR^n$, denoted $\nu(\mathcal{C})$, where
\begin{equation*}
    \nu(\mathcal{C}) :=  \expec\left[\left(\sup_{v \in \mathcal{C}, \norm{v}_2 \leq 1} v^\top g \right)^2 \right]
\end{equation*}
for $g \sim \mathcal{N}(0,I_n)$. Since $\vect(\tanconeB^\top) \subset \vect(\mathcal{C}_{B}^\top)$, this implies $\nu(\vect(\tanconeB^\top)) \leq \nu(\vect(\mathcal{C}_{B}^\top))$; here $\vect(\cdot)$ and transpose operations are applied element-wise. It then follows that 
\begin{align*}
    \revj{\nu(\vect(\tanconeB^\top))} \leq \revj{\nu(\vect(\mathcal{C}_{B}^\top))} = \sum_{i=1}^{n} \sum_{j=1}^{l_i+1} \nu(-s_{i,j} \monot_{\abs{T_{i,j}}}) = \sum_{i=1}^{n} \sum_{j=1}^{l_i+1} \nu(\monot_{\abs{T_{i,j}}})
    \leq \sum_{i=1}^{n} (l_i + 1)  \log\left(\frac{e n}{l_i + 1} \right)
\end{align*}
where the last inequality follows from bounds on the statistical dimension of monotone cones \cite{lotz14} and Jensen's inequality. By Cauchy-Schwarz, we then have that 
\begin{equation*}
   w(\tanconeB \cap \frobball) \leq \sqrt{\expec \left[\left(\sup_{V \in \tanconeB \cap \frobball} \dotprod{G}{V} \right)^2 \right]} = \sqrt{\nu(\vect(\tanconeB^\top))} \leq \left(\sum_{i=1}^{n} (l_i + 1)  \log\left(\frac{e n}{l_i + 1} \right) \right)^{1/2} 
\end{equation*}
It then follows using \eqref{eq:talag_maj_meas_thm} that 
\begin{align}\label{eq:gamma_2_bds_lip}
    \gamma_2(\tanconeB \cap \frobsphere, \norm{\cdot}_F) \lesssim \gamma_2(\tanconeB \cap \frobball, \norm{\cdot}_F) \lesssim \left(\underbrace{\sum_{i=1}^{n} (l_i + 1)  \log\left(\frac{e n}{l_i + 1} \right)}_{=: \alpha_n} \right)^{1/2}.
\end{align}
It remains to bound $\gamma_1(\tanconeB \cap \frobsphere, \norm{\cdot}_F)$. As in the proofs of the previous corollaries, we use Sudakov minoration along with covering number bounds for $\frobball$ to obtain (for any $\epsilon > 0$)
\begin{equation*}
    \log \calN(\tanconeB \cap \frobball, \norm{\cdot}_F, \epsilon) \lesssim \min \set{\frac{\alpha_n}{\epsilon^2}, n^2 \log \left(\frac{12}{\epsilon} \right)}.
\end{equation*}
Then we have for any $\epsilon \in (0,2)$
\begin{align*}
    \gamma_1(\tanconeB \cap \frobball, \norm{\cdot}_F) \lesssim \int_{0}^{\epsilon_1} n^2 \log(12/\epsilon) d\epsilon + \int_{\epsilon_1}^{2} \frac{\alpha_n}{\epsilon^2} d\epsilon.
\end{align*}
Choosing $\epsilon_1 = \frac{\sqrt{\alpha_n}}{n}$, we obtain (analogous to the calculation in Section \ref{subsec:proof_corr_sparse})
\begin{align}\label{eq:gamma_1_bds_lip}
    \gamma_1(\tanconeB \cap \frobball, \norm{\cdot}_F) \lesssim n\sqrt{\alpha_n} \left[\log(\frac{n}{\sqrt{\alpha_n}}) + 1 \right].
\end{align}
For simplicity, we set $l_i = l$ and use $L_i \leq L$ for each $i \in [n]$. Using the estimates in \eqref{eq:gamma_2_bds_lip}, \eqref{eq:gamma_1_bds_lip} in Theorem \ref{thm:main_err_tangent_cone}, we have for 
\begin{equation} \label{eq:T_Lipreg_bd}
    T \gtrsim J^4(A^*) \max \set{n(l+1) \log(\frac{en}{l+1}), \log^2(1/\delta)}
\end{equation}
that with probability at least $1-\delta$,
\begin{align*}
    \norm{\est{A} - A^*}_F 
    &\lesssim J(A^*) \left(\frac{\log(1/\delta) + \sqrt{n(l+1) \log(\frac{en}{l+1})}}{\sqrt{T}} + \frac{n^{3/2} (l+1)^{1/2} \log^{3/2}(\frac{en}{l+1})}{T}\right) + J^{2}(A^*) \revj{\frac{n^2 L}{T l}} \\
    &\lesssim J^2(A^*) \left[ \frac{\log(1/\delta)}{\sqrt{T}} + \sqrt{l} \left(\max\set{\sqrt{\frac{n\log n}{T}}, \frac{(n \log n)^{3/2}}{T}} \right) +   \revj{\frac{n^2 L}{T l}} \right].
\end{align*}
Now for the choice 
\begin{equation} \label{eq:l_bd_Lipreg}
    l = \left \lfloor \left(\frac{\revj{n^2L/ T}}{\max\set{\sqrt{\frac{n \log n}{T}}, \frac{(n\log n)^{3/2}}{T}}} \right)^{\revj{2/3}} \right\rfloor 
    = \left \lfloor \frac{\revj{\frac{n L^{2/3}}{T^{1/3} (\log n)^{1/3}}}}{\left(\max\set{1,\frac{n\log n}{\sqrt{T}}} \right)^{\revj{2/3}}} \right\rfloor
\end{equation}
we \revj{readily} obtain the stated bound on $\norm{\est{A} - A^*}_F$ in the corollary. 

Using \eqref{eq:l_bd_Lipreg} in \eqref{eq:T_Lipreg_bd}, we obtain after some simplifications the sufficient condition in \eqref{eq:T_bd_lipreg_fin}.
\revj{To see this, note that \eqref{eq:T_Lipreg_bd} holds provided $T \gtrsim J^4(A^*) \max \set{nl \log n, \log^2(1/\delta)}$. Using \eqref{eq:l_bd_Lipreg}, and the fact $\max\set{a,b} \asymp a + b$ for $a,b \geq 0$, this means that it suffices that,}
\revj{
\begin{align*}
    T \gtrsim J^4(A^*) \max \set{\frac{n^2 L^{2/3} (\log n)^{2/3}}{T^{1/3} + (n \log n)^{2/3}}, \log^2(1/\delta)}.
\end{align*}}
\revj{The above condition is readily verified to hold provided \eqref{eq:T_bd_lipreg_fin} holds.}

\section*{Acknowledgment}
H.T would like to thank Yassir Jedra and Sjoerd Dirksen for helpful clarifications regarding their results in their works \cite{Jedra20}, \cite{dirksen15} respectively. \revo{The first version of this paper was prepared when H.T was affiliated to Inria Lille. \revj{H.T was supported by a Nanyang Associate Professorship (NAP) grant from NTU Singapore.}}

\newpage
\bibliographystyle{plain}
\bibliography{references}

\end{document}